\DeclareMathOperator{\R}{\mathbb{R}}
\newcommand{\norm}[1]{\left\lVert#1\right\rVert}
\newtheorem{theorem}{Theorem}[section]
\newtheorem{lemma}[theorem]{Lemma}
\begin{document}

\preprint{APS/123-QED}

\title{Computing an Aircraft's Gliding Range and Minimal Return Altitude in Presence of Obstacles and Wind}

\author{Giovanni Piccioli}
\affiliation{École polytechnique fédérale de Lausanne (EPFL) \\ Statistical Physics of Computation Laboratory}%
\email{giovannipiccioli@gmail.com}

\date{\today}

\begin{abstract}
In the event of a total loss of thrust, a pilot must identify a reachable landing site and subsequently execute a forced landing. To do so, they must estimate which region on the ground can be reached safely in gliding flight. We call this the gliding reachable region (GRR). To compute the GRR, we employ an optimal control formulation aiming to reach a point in space while minimizing altitude loss. A simplified model of the aircraft's dynamics is used, where the effect of turns is neglected.
The resulting equations are discretized on a grid and solved numerically. Our algorithm for computing the GRR is fast enough to run in real time during flight, it accounts for ground obstacles and wind, and for each point in the GRR it outputs the path to reach it with minimal loss of altitude.

A related problem is estimating the minimal altitude an aircraft needs in order to glide to a given airfield in the presence of obstacles. This information enables pilots to plan routes that always have an airport within gliding distance. We formalize this problem using an optimal control formulation based on the same aircraft dynamics model. The resulting equations are solved with a second algorithm that outputs the minimal re-entry altitude and the paths to reach the airfield from any position while avoiding obstacles.

The algorithms we develop are based on the Ordered Upwind Method \cite{sethian2003ordered} and the Fast Marching Method \cite{sethian1996fast}.
\end{abstract}
\maketitle

\section{Introduction}
\subsection{Motivation}
The gliding reachable region (GRR) is the region on the ground that can be safely reached in gliding flight from a certain position and altitude. Knowing the GRR allows pilots to quickly assess which landing sites are within reach in the event of an engine failure. 
To aid in this task several avionics companies such as Garmin, Foreflight, and LXNAV have developed respectively the Glide Range Ring \cite{garmin}, Glide Advisor \cite{foreflight}, and glide range area \cite{lxnav} aids. These algorithms all display the GRR on the navigation map in real time, allowing the pilot to quickly determine the reachability of a landing site. Aside from emergencies, estimating the reachable region is also of great importance for glider pilots.
In this work, we consider two problems:
\begin{enumerate}
    \item \textbf{Gliding Reachable Region Problem (GRRP):} given the initial position and altitude of an aircraft, what is the GRR? For each point in the GRR, what is the path the aircraft should take to reach it while minimizing the loss of altitude?
    \item \textbf{Minimal Return Altitude Problem (MRAP):} given an airfield and a position $\bm x\in \R^2$ on a map, what is the minimal altitude an aircraft must be over $\bm x$ in order to reach the airfield in gliding flight? Supposing the aircraft is at position $\bm x$ at the minimal altitude, what path should it take to reach the airport?
\end{enumerate}
We already commented on the relevance of the first problem, we now comment on why the second one is also interesting.
Suppose we want to know if we can reach a certain airport in gliding flight starting from position $\bm x\in \R^2$ and altitude $z$. One option is to compute the GRR and check if the airport falls within it. However, finding the GRR can be computationally expensive. Suppose instead we have access to a database that for each airfield $q$ and position $\bm y\in\R^2$ stores  $A_q(\bm y)$, the minimal altitude to return to the airfield from $\bm y$. Then to establish if airfield $q$ is reachable it's sufficient to check if $z\geq A_q(\bm x)$. In principle the contour lines of $A_q(\cdot)$ can be printed onto a paper map, allowing the pilot to know if they're within gliding range of the airfield without resorting to electronic devices. In addition, knowing the minimal return altitudes in advance can help pilots to plan flights and maintain an altitude sufficient to always reach a landable location.

GRRP and MRAP appear to be two quite different problems. We decide to study them in the same article because we employ similar mathematical formalisms and algorithms to solve them.

\subsection{Our contribution}
We develop the algorithms Glikonal-G and Glikonal-M to solve respectively GRRP and MRAP. The algorithms are respectively based on a variant of the Ordered Upwind Method (OUM) \cite{sethian2003ordered} and Fast Marching Method \cite{sethian1996fast} (FMM). We claim Glikonal-G is fast enough to run on an on-board computer in real time, taking less than $0.2$ seconds to run on a $100\times100$ grid. Also given a point within the GRR, Glikonal-G can compute the optimal gliding path to reach it. Similarly, Glikonal-M can produce a feasible (but not necessarily optimal, in the sense of minimal altitude loss) path that leads to the airfield.

First, we model the aircraft's dynamics using a 3DOF model, where the airspeed vector is the control parameter. We assume in particular that the aircraft is able to take arbitrarily sharp turns. Also, the sink rate of the aircraft is assumed to depend exclusively on the airspeed, thus we do not take into account the fact that an airplane experiences a more negative sink rate when taking turns. In Appendix \ref{app:altitude_loss_turns} we comment on the validity of this approximation and on methods to overcome it. 

GRRP is then formulated as an optimal control problem consisting of determining the trajectory from the initial position of the aircraft to any point in space that minimizes the altitude loss, while taking wind and ground obstacles into account. If this trajectory exists, then the final point is part of the GRR. The Hamilton-Jacobi-Bellman (HJB) PDE for this problem is derived and the Glikonal-G algorithm is introduced to solve it on a discrete grid. 

To solve MRAP instead one constrains the final position of the aircraft to the the airport position and the final altitude to be a safe altitude above the airport. Then a related optimal control problem is formulated, giving an HJB equation similar to that of GRRP. 
The Glikonal-M algorithm solves the HJB PDE on a discrete grid outputting the minimal altitude function.
\subsection{Related literature}
The simplest method to compute the GRR in the absence of obstacles is to assume that the aircraft will follow a path composed by an initial turn towards the desired heading and then a straight path at the best glide speed. This was explored in \cite{di2016optimizing,chen2024gliding} where the turn with the least altitude loss is computed, and in \cite{atkins2006emergency} where the effect of wind is also considered (although without optimizing the turn).

In other works the task of finding a gliding path between two points, where the initial and final headings are also given, is considered. In \cite{ambrosino2009path, eng2010automating,atkins2006emergency, stephan2016fast,izuta2017path, coombes2014reachability} paths are allowed to be more complex, by concatenating basic maneuvers (e.g., constant bank turns, steady straight flight, accelerated straight flight) in a sequence. In none of these works, however, obstacles are considered. Most of these studies rely on a generalization of Dubins paths \cite{dubins1957curves}, i.e., paths whose maximal curvature is bounded. 

In \cite{coombes2014reachability} the effect of a uniform wind is considered.
In \cite{adler2012optimal,akametalu2018reachability} the idea of defining a path between two points that minimizes the lost altitude by concatenating basic maneuvers was extended to take ground obstacles and wind into account. In both these works a 6DOF model of the aircraft is used. The algorithm proposed in \cite{adler2012optimal} consists of searching over the discretized state space in six dimensions.

In other studies, the optimal control formulation of the problem is also considered. In particular \cite{akametalu2018reachability,bayen2007aircraft} use HJB equations to conduct the reachability analysis. However \cite{bayen2007aircraft} considers only the longitudinal movements of the aircraft (i.e., no turns are allowed).
Another line of work proposed to deal with obstacles by means of a visibility graph \cite{meuleau2009emergency,meuleau2009comparison}. The obstacles are modeled as polygons: in this case, the shortest path between two points on the plane, while avoiding the polygons, will touch the polygons exclusively on the vertices. This remark is exploited to build piecewise linear paths that avoid obstacles. 
The work closest to ours is \cite{segal2023altitude}, in which a similar aircraft dynamics to ours is used and obstacles of arbitrary shape are considered. The algorithm in \cite{segal2023altitude} relies on a modified visibility graph to compute the paths with the minimal altitude loss. Glikonal-G does fundamentally the same thing, while being more efficient and accounting for wind.
Other approaches to the problem of finding gliding paths between two points rely on genetic algorithms \cite{silva2017heuristic}, Pseudo-Spectral-Methods \cite{benasher2010pseudo}, dedicated optimization schemes \cite{fang2019emergency}.

Except for \cite{di2016optimizing,atkins2006emergency,chen2024gliding} all other algorithms allow to compute a gliding path between an initial and a single final point. Therefore to compute the GRR one would need to run the algorithm multiple times changing the final point each time, which is unpractical. In contrast, Glikonal-G computes all the trajectories to reach points in the GRR in a single run.

Turning to MRAP, the idea of precomputing the gliding trajectories from any point in the planned flight route to a safe landing zone has been explored in \cite{ayhan2019preflight}. This idea is similar to MRAP, which produces a precomputed database to determine the reachable airfields and the trajectories to get there.

This section would be incomplete without a review of the algorithms developed by Foreflight, LXNAV and Garmin, which are used in practice. While their software is proprietary and hence not published we can speculate on how it works by looking at their products. It appears that Foreflight and Garmin both use a variant of \cite{di2016optimizing} which compute the GRR by projecting straight lines in all directions from the aircraft and checking when they intersect the ground. We refer to this class of algorithms as line-of-sight methods, since it assumes that (after possibly an initial turn to the desired heading) the aircraft proceeds in a straight line. LXNAV seems to use a more sophisticated method which is able to pass 'around' the obstacles instead.

Finally, let us comment on the origin of the algorithms we use to solve the problem. Sethian first developed the FMM to solve the eikonal equation in \cite{sethian1996fast}. The eikonal equation describes the motion of a propagating front in a medium. As we will see, the anisotropic version of the eikonal equation, belonging to the class of static HJB equations \cite{bardi1997optimal,bressan2011viscosity}, is linked to the optimal dynamics of the aircraft in the presence of wind. A set of algorithmic schemes called Ordered Upwind Methods (OUM) able to solve this class of equations has been developed by Sethian and Vladimirsky in \cite{sethian1999fast,sethian2003ordered}. Glikonal-G is a modification of the OUM, instead, Glikonal-M is a modified FMM.
\section{Problem definition}
\subsection{Aircraft dynamics}
In this section, we introduce the dynamical model for the glider and define the optimal control problem. Throughout the paper, we assume all reference systems are East-North-up oriented.
Let $\bm r=(\bm x,z)$ indicate the position of the glider in 3D space, where $\bm x=(x_1,x_2)$ represents the horizontal position, and $z$ is the altitude above mean sea level. 
Consider also a wind vector field $\bm W:\R^3\to \R^2$. $\bm W(\bm r)$ is a two-component vector indicating the wind (i.e., the horizontal component of the air velocity with respect to the ground) at position $\bm r$. We model the dynamics of the glider in the following way: 
\begin{align}
    \label{eq:dyn1}
    \dot{\bm x}(t)=\Tilde{\bm a}(t)+\bm W(\bm r(t)), \qquad \dot{z}(t)=s(\norm{\Tilde{\bm a}(t)})
\end{align}
Where $\Tilde{\bm a}(t)$ is the horizontal component of the airspeed at time $t$, and $s: \R_+\to (-\infty,0)$ is the sink rate as a function of the horizontal airspeed. \footnote{We can also model the vertical movements of the air with a function $s_\text{air}:\R^3\to\R$. $s_\text{air}(\bm r)$ is the vertical velocity of the air mass at position $\bm r$. For technical reasons the overall sink rate must be negative, otherwise the glider can climb up by staying put on a particular point. To take vertical air movements into account, just replace $s\to s+s_\text{air}$ in the rest of the derivation.} In straight, steady-state gliding flight, each horizontal airspeed corresponds to a vertical airspeed (or sink rate). The functional form of $s$ is given by the polar curve of the aircraft.

The vector $\bm V_\text{air}\coloneqq\left(\Tilde a_1,\Tilde a_2,s\left(\sqrt{\Tilde a_1^2+\Tilde a_2^2}\right)\right)$ is therefore the airspeed.

The control parameter is $\Tilde{\bm a}:\R_+\mapsto \R^2 $. The only assumption we make about $\Tilde{\bm a}$ is that it is a measurable function. This implies that we allow the glider's speed to change discontinuously both in direction and magnitude. $\Tilde{\bm a}$ is subject to the constraint $V_\text{S}<\norm{\bm V_\text{air}}<V_\text{NE}$, where $V_\text{S}$ and $V_\text{NE}$ are respectively the stall and never exceed speeds of the aircraft.
Let us make some additional remarks about the limitations of our dynamical model. Since the sink rate depends exclusively on the magnitude of the speed and not on the acceleration, the model neglects the effect of turns. In reality the sink rate is higher (more negative) during a turn. Also, arbitrarily sharp turns are allowed under this model. 

Define the ground speed to be $\bm a(t)=\Tilde{\bm a}(t)+ \bm W(\bm r(t))$. For mathematical convenience, we suppose that our control parameter is directly the ground speed $\bm a$. Reformulating the equations of motion in terms of $\bm a$ we obtain
\begin{align}
    \label{eq:dyn2}
    \dot{\bm x}(t)=\bm a(t), \qquad \dot{z}(t)=s(\norm{\bm a(t)-\bm W(\bm r(t))}).
\end{align}

To take obstacles into account we say $h_\text{min}(\bm y)$ is the minimum allowed altitude at point $\bm y$.  The function $h_\text{min}:\R^2\mapsto \R$ can for example be taken to be the terrain elevation profile plus a margin. 
Finally let us specify the allowed controls: let $\mathcal{A}_{gs}(\bm r)\coloneqq \left\{\bm a \text{ s.t. } V_\text{S}<\norm{\bm V_\text{air}}<V_\text{NE}\right\} \subset \R^2$ be the set of allowed ground speeds at point $\bm r$.
Now that the aircraft dynamics and the constraints are specified we formulate GRRP and MRAP.

\subsection{Gliding Reachable Region Problem}
\label{sec:GRRP_theory}
Given an initial position and altitude $(\bm x_0,z_0)$ we wish to compute the region the aircraft can reach in gliding flight.
To do so we compute the highest altitude at which the glider can reach any given position $\bm y\in\R^2$. We define $z_0-U(\bm y)$ to be the highest altitude at which the glider can be over $\bm y$. Hence $U(\bm y)$ represents the minimal loss of altitude to reach $\bm y$ from $\bm x_0$. Mathematically $U(\bm y)$ is obtained by solving the following optimal control problem
\begin{align}
\label{eq:optimal_control_t}
    U(\bm y)&=\min_{\bm a\in \mathcal F}  \,z_0-z(t_f)\\
    \label{eq:glid_constraints}
    \text{subject to: the dynamics \eqref{eq:dyn2}}, \,\bm x(t_f)=\bm y,\; \bm x(0)=\bm x_0, \;\;&z(t)\geq h_\text{min}(\bm x(t)) \,\,\forall t\in[0,t_f], \,\, \bm a(t)\in \mathcal A_{gs}(\bm r(t)) \,\,\forall t\in[0,t_f],
\end{align}
where $\mathcal F$ is the set of measurable ground speed functions. 
If no path satisfying the constraints \eqref{eq:glid_constraints} exists, then $\bm y$ cannot be reached in gliding flight and we say $U(\bm y)$ is not defined. The GRR is then the set of points where $U(\bm y)$ is defined, in formulas
\begin{equation}
\label{eq:GRR}
    \text{GRR}(\bm x_0,z_0)\coloneqq \left\{\bm y\in\R^2 \text{ s.t.  the function } U(\bm y) \text{ satisfying \eqref{eq:optimal_control_t},\eqref{eq:glid_constraints} is defined}\right\}\,.
\end{equation}

We now aim to write an optimal control problem equivalent to \eqref{eq:optimal_control_t} but where time is replaced with the lost altitude. First remark that since $s<0$ the altitude is a decreasing function of time.
Defining the lost altitude $h(t)=z_0-z(t)$ we can we can reparametrize the dynamics using $h$ in place of $t$. With a change of variable, we obtain 
\begin{equation}
\label{eq:dyn_h}
    \frac{d\bm x}{dh} (h)=\frac{d\bm x}{dt}(t(h))\frac{dt}{dh}(h)=\frac{\bm a}{\left|s(\norm{\bm a(h)-\bm W(\bm x(h),z_0-h)})\right|}=\Tilde g(\bm x(h),z_0-h,\bm a(h)) \frac{\bm a}{\norm{\bm a}}\,.
\end{equation}
In the second to last step we renamed $\bm a(t(h))\to \bm a(h)$ and $ \bm W(\bm r(t(h)))\to \bm W(\bm x(h),z_0-h)$. In the last step, we introduced function $\Tilde g(\bm x,z,\bm a)\coloneqq\norm{\bm a}/|s(\norm{\bm a-\bm W(\bm x,z)})|$ representing the glide ratio of the aircraft: it tells us how many meters the aircraft moves horizontally for each meter of lost altitude. Notice $\Tilde g$ depends on the position of the aircraft and on its ground speed. When the wind is null, $\Tilde g$ depends only on $\norm{\bm a}$. As a result of this change of variable, we obtained a new formulation of the optimal control problem, where we got rid of the variable $z$. The reparametrized problem is
\begin{align}
\label{eq:optimal_control_h}
    U(\bm y)=\min_{\bm a\in \mathcal F}  h_f \\
    \text{subject to: eq. \eqref{eq:dyn_h}}, \,\bm x(0)=\bm x_0, \,\bm x(h_f)=\bm y,\, z_0-h\geq h_\text{min}(\bm x(h)) &\,\forall h\in [0,h_f],\,\bm a(h)\in \mathcal A_{gs}(\bm r(h)) \,\forall h\in[0,h_f],
\end{align}
where $\bm r(h)=(\bm x(h),z_0-h)$.
At this point, we can derive the HJB equation corresponding to this problem. Let $\bm x:[0,h_f]\mapsto \R^2$ be the trajectory that minimizes the lost altitude when starting from point $\bm x(0)=x_0, \, z(0)=z_0$ and arriving in $\bm x(h_f)=\bm y, z(h_f)=z_0-h_f$. 
Then we can write
\begin{align}
\label{eq:bellman_opt_int_grrp}
    U(\bm y)= \min_{\bm a\in \mathcal F} \int_0^{h_f} dh=U(\bm x(h_f-\epsilon))+\min_{\bm a\in \mathcal F}  \int_{h_f-\epsilon}^{h_f} dh= U(\bm y)+\min_{\bm a\in \mathcal{A}_{gs}(\bm r(h_f))}\left(-\nabla U(\bm y)\cdot \frac{d\bm x}{dh} (h_f) +1\right)\epsilon+ O(\epsilon^2)\,,
\end{align}
where in the last step we expanded $U(\bm x(h_f-\epsilon))=U(\bm y)-\nabla U(\bm y)\cdot\frac{d\bm x}{dh} (h_f)$.
Since this must hold for any small $\epsilon$,  the term multiplying $\epsilon$ must be zero giving the HJB equation
\begin{equation}
    \label{eq:HJB_R2}
    \min_{\bm a\in\mathcal{A}_{gs}(\bm r(h_f))} (-\nabla U(\bm y))\cdot \Tilde g(\bm y,z_0-h_f,\bm a) \frac{\bm a}{\norm{\bm a}}=-1
\end{equation}
At this point, we can further simplify the equation by first changing the minimum into a maximum using $\min_x f(x)=-\max_x -f(x)$, and then carrying out the optimization with respect to $\norm{\bm a}$, and be left therefore with optimizing only with respect to the direction $\bm{\hat{a}}\coloneqq \bm a/\norm{\bm a}$. In formulas
\begin{align}
    &\min_{\bm a\in\mathcal{A}_{gs}(\bm r(h_f))}  (-\nabla U(\bm y))\cdot \Tilde g(\bm y,z_0-h_f,\bm a) \frac{\bm a}{\norm{\bm a}}= -\max_{\bm a\in\mathcal{A}_{gs}(\bm r(h_f))}  \nabla U(\bm y)\cdot \Tilde g(\bm y,z_0-h_f,\bm a) \frac{\bm a}{\norm{\bm a}}\\&=-\max_{\bm{\hat a}\in S^1}  \left[\nabla U(\bm y)\cdot\bm{\hat a}\max_{\substack{\norm{\bm a}\in\R_+ \\\bm a\in\mathcal{A}_{gs}(\bm r(h_f)) }}\Tilde g(\bm y,z_0-h_f,\bm{\hat a}\norm{\bm a})\right]\,.
\end{align}
In taking the maximum of $g$ with respect to the ground speed's magnitude we're supposing that the glider adapts its speed to maximize the glide ratio in direction $\bm{\hat a}$. 

Therefore defining $g(\bm y, z, \bm{\hat a})\coloneqq \max_{\substack{\norm{\bm a}\in\R_+ \\\bm a\in\mathcal{A}_{gs}((\bm y,z)) }}\Tilde g(\bm y,z,\bm{\hat a}\norm{\bm a})$ and replacing $z_0-h_f$ with $U(\bm y)$ since the two are equal, we can rewrite \eqref{eq:HJB_R2} in its definitive form together with the obstacle avoidance constraint and the initial condition:
\begin{align}
    \label{eq:HJB_S1}
    &\max_{\bm{\hat a}\in S^1} \nabla U(\bm y)\cdot \bm{\hat a} g\left(\bm y,z_0-U(\bm y),\bm{\hat a}\right) =1\\
    \label{eq:constraints_HJB_S1}
    &z_0-U(\bm y)\geq h_\text{min}(\bm y),\qquad U(\bm{x_0})=0
\end{align}
Equation \eqref{eq:HJB_S1} is a first order nonlinear PDE belonging to the well known class of static Hamilton-Jacobi PDEs \cite{sethian1996fast,sethian1999fast,sethian2003ordered}. In general, it does not admit a closed-form solution, however, very efficient algorithms exist to solve it approximately. Before moving on let us briefly comment on $g$. In our derivation we supposed that the aircraft flies at the speed that yields the optimal glide ratio, implying that the pilot should adapt the magnitude of the airspeed to the local wind. Alternatively, to obtain more conservative estimates one can assume instead that the aircraft flies at a constant horizontal airspeed $v$. In this case $g(\bm x,z,\bm{\hat a})=\norm{\Tilde{\bm a}+ \bm W(\bm x, z)}/s(v)$, where $\Tilde{\bm a}$ solves $\bm{\hat a}=\frac{\bm W(\bm x, z )+ \Tilde{\bm a}}{\norm{\bm W(\bm x, z )+ \Tilde{\bm a}}}$, with $\norm{\Tilde{\bm a}}=v$.
\subsubsection{Optimal paths}
Consider a point $\bm y\in\text{GRR}(\bm x_0,z_0)$ that we want to reach. How can we compute the optimal path going from $\bm x_0$ to $\bm y$ that results in the least loss of altitude?
The optimal control formulation allows us to easily answer this question. Consider the optimal path $\bm x:[0,h_f]\mapsto \R^2$ going from $\bm x(0)=\bm x_0$ to $\bm x(h_f)=\bm y$.
Then from \eqref{eq:bellman_opt_int_grrp} we see that the path's derivative in $\bm y$ is $\frac{d\bm x}{dh}(h_f)=\arg\min_{\bm a\in \mathcal{A}_{gs}(\bm r(h_f))}\left(-\nabla U(\bm y)\cdot\bm{\hat a}\Tilde g(\bm y,z_0-h_f,\bm a)\right)$. Since we're only interested in the direction $\bm{\hat a}$ and we don't care about the magnitude, we can define the vector field $\bm{\hat a}_G:\R^2\mapsto S^1$ 
\begin{equation}
\label{eq:opt_vect_field_grrp}
    \bm{\hat a}_G(\bm y)\coloneqq \arg\max_{\bm{\hat a}\in S^1}\nabla U(\bm y)\cdot \bm{\hat a} g\left(\bm y,z_0-U(\bm y),\bm{\hat a}\right)\,.
\end{equation}
$\bm{\hat a}_G(\bm y)$ is indeed the direction of the optimal path going from $\bm x_0$ to $\bm y$, in $\bm y$. 
To retrieve the whole trajectory it's sufficient to integrate the vector field backwards starting from $\bm y$.
Formally this means integrating the ODE
$\frac{d\bm \gamma}{du}(u)=-\bm{\hat a}_G (\bm \gamma(u))$ with initial condition $\bm \gamma (0)=\bm y$. 

\subsection{Gliding as front propagation}
 To better understand the meaning of equation \eqref{eq:HJB_S1} we shall consider the simpler case of zero wind and no obstacles. Thanks to the absence of wind, $\tilde g$ becomes a constant and the minimum can be taken explicitly: $\max_{\bm{\hat a}\in S^1} \nabla U(\bm y)\cdot \bm{\hat a}=\norm{\nabla U(\bm y)}$. Then \eqref{eq:HJB_S1} becomes
 \begin{equation}
 \label{eq:eikonal}
    \norm{\nabla U(\bm y)}=\frac{1}{g}.
 \end{equation}
which is known as the eikonal equation. The initial condition is $U(\bm x_0)=z_0$. 
The eikonal equation arises when describing the arrival times of a front at each point in space. Suppose we have a sheet of paper and at time $t=0$ we set fire to it at a point $\bm x_0\in \R^2$. Suppose further that the fire front advances through the paper with speed $g$. Then letting $U(\bm y)$ be the time at which the fire front reaches $\bm y$, we have that $U$ satisfies \eqref{eq:eikonal}. We can also include obstacles in this analogy. 
Suppose some portion of the paper is wet. In this case, the fire front stops at the border of the wet region. This is similar to what happens when the glider's altitude falls below $h_\text{min}$. 
This analogy shows how we can see the descending glider as an expanding front that passes around the obstacles until it collides with them and stops.
\subsection{Minimal return altitude problem}
\label{sec:mrap_main}
In MRAP we want to compute the minimal altitude over a position $\bm y\in \R^2$ to safely return to an airfield at position $\bm x_a\in\R^2$.
This is equivalent to computing the minimum altitude $z_0$, such that $\bm x_a\in \text{GRR}(\bm y,z_0)$. We encode this in the minimal return altitude function $V:\R^2\mapsto \R$ defined as 
\begin{equation}
    \label{eq:V_mrap}
    V(\bm y)=\min_{z_0} \left\{ z_0 \text{ s.t. } \bm x_a\in \text{GRR}(\bm y,z_0)\right\}.
\end{equation} 
In Appendix \ref{app:mrap} we present the full derivation of the HJB equations for MRAP, we report below the final equations for $V$:

  \begin{align}
    \label{eq:HJB_mrap}
      &\begin{cases}
    \max_{\bm{\hat a}\in S^1}\nabla V(\bm y)\cdot \bm{\hat a}\,\left[\max\left(1/g(\bm y,V(\bm y),-\bm{\hat a}), \;\nabla h_\text{min}(\bm y)\cdot \bm{\hat a}\right)\right]^{-1}=1 & \text{if } V(\bm y)=h_\text{min}(\bm y) \\
     \max_{\bm{\hat a}\in S^1}\nabla V(\bm y)\cdot \bm{\hat a}\,g(\bm y, V(\bm y),-\bm{\hat a})=1, & \text{if } V(\bm y)>h_\text{min}(\bm y)\,,
  \end{cases}\\
  \label{eq:HJB_mrap_inicond}
  &V(\bm x_a)=h_\text{min}(\bm x_a).
  \end{align}
To give some intuition about this equation let us suppose the wind is zero, and therefore $g$ is constant. Suppose also that there are no obstacles ($h_\text{min}=0$).
In this case the solution is $V(\bm y)=\frac{1}{g}\norm{\bm y-\bm x_a}$, that is, a cone of slope $1/g$ with the vertex at $\bm x_a$. When obstacles are present, the cone solution can fall below $h_\text{min}$, in this case, it's not valid anymore and $V$ instead starts tracking the profile of $h_\text{min}$. See figure \ref{fig:mrap_1d_sketch} for a sketch of $V$ in the one-dimensional case. 
One can see the process of solving MRAP as a front propagating outwards from $\bm x_a$, where the arrival time of the front is the minimal return altitude. Similarly to GRRP, in MRAP one defines the vector field $\bm{\hat a}_M:\R^2\mapsto S^1$, where $\bm{\hat a}_M(\bm y)$ is the vector that attains the maximum in \eqref{eq:HJB_mrap}. Integrating $-\bm{\hat a}_M(\bm y)$ from the aircraft position, gives a feasible path terminating in $\bm x_a$. Feasible means that in all points the path does not go below $h_\text{min}$ and that the steepness is at least $1/g$. This path is however different in general from the path that minimizes the altitude loss to reach $\bm x_a$. In appendix \ref{app:optimal_traj_mrap_grrp} we compare the trajectories found in GRRP with those of MRAP and comment on their differences.
\section{Algorithms}
Equations \eqref{eq:HJB_S1} and \eqref{eq:HJB_mrap} can be solved analytically only in the case of uniform wind and no obstacles. We present their solution in Appendix \ref{app:analytic_grrp_mrap}. 
In all other cases, a numerical approach is needed. Algorithms to solve the HJB equations of the form \eqref{eq:HJB_S1} in the absence of constraints have been developed by Sethian and Vladimirsky in \cite{sethian2003ordered,sethian1996fast,sethian1999fast}. 
These methods are based on discretizing the 2d plane using a grid of points and subsequently approximating the value of $U$ at each grid point. The algorithms mimic the front propagation by setting the values of $U$ starting from the nodes closer to the initial point (respectively the aircraft position and the airfield position in MRAP and GRRP) and expanding outwards.

Glikonal-G, the algorithm to solve GRRP, takes as input the initial position and altitude of the glider,  the elevation profile, the wind vector field, and the function $g(\bm y,z, \bm{\hat a})$ giving the glide ratio at $(\bm y,z)$ in direction $\bm{\hat a}$. It then computes an approximation of $U$ (solving equations  \eqref{eq:HJB_S1},\eqref{eq:constraints_HJB_S1}) on a grid. Appendix \ref{app:algos_GRRP} contains the pseudocode of a version of Glikonal-G for the special case of no wind. This pseudocode is based on the FMM scheme and therefore does not reflect the true implementation of Glikonal-G which is instead OUM based. We include it to familiarize the reader with the logic of a simpler version of the algorithm.

Glikonal-M, the algorithm to solve MRAP, is only able to deal with the zero wind case. This is because solving the equations with an arbitrary wind field presents some additional technical challenges. In Appendix \ref{sec:simplifications_hjb_mrap} we derive the simplified form of the HJB equations \eqref{eq:HJB_mrap} for the windless case.
Glikonal-M takes as input the position of the airfield, the elevation profile, and the glide ratio $g\in\R$ (which is constant in the absence of wind). It outputs an approximation of $V$ solving \eqref{eq:HJB_mrap},\eqref{eq:HJB_mrap_inicond} on a grid. The pseudocode of Glikonal-M is reported in Appendix \ref{app:algo_mrap}. 

The grid spacing is a parameter of both algorithms. The finer the grid, the smaller the error, the longer it takes to run. When the grid spacing goes to zero both algorithms exactly solve the respective PDEs. For a proof of this fact see \cite{sethian2003ordered,sethian1996fast}. The time complexity of both algorithms is $O(n\log n)$, where $n$ is the number of points in the grid. 

\section{Numerical results}
In this section, we present the numerical results obtained with Glikonal-G and Glikonal-M in several settings. The code to reproduce all experiments can be found at \url{https://github.com/giovannipiccioli/glikonal}.
\subsection{Glikonal-G}
\begin{figure}[h]
    \centering
    \includegraphics[width=\textwidth]{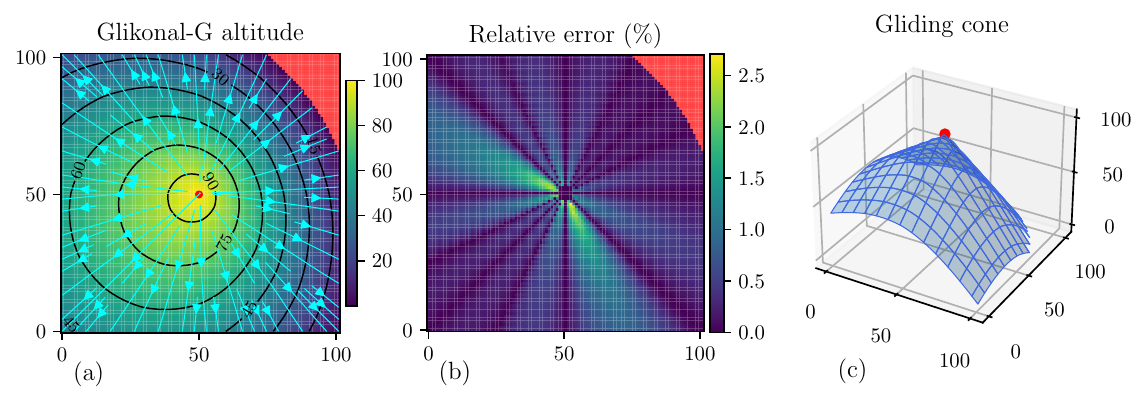}
    \caption{Result of running Glikonal-G on flat terrain with uniform wind. The red dot represents the initial position of the glider. \textbf{(a)} Heatmap and contour lines of the function $z_0-U_G$. The red region is outside the gliding reachable region. The turquoise lines are the optimal trajectories. \textbf{(b)} Relative error of the Glikonal-G solution, i.e., $(U_G-U)/U$. \textbf{(c)} 3d plot of the function $z_0-U_G$.}
    \label{fig:flat_glikonal_g}
\end{figure}

In the first set of numerical experiments, we assume for simplicity (and for visualization purposes) that the glide ratio is $1$ in the absence of wind and that the aircraft's airspeed is also $1$. When the wind is present we assume that the aircraft travels in every direction at fixed airspeed, therefore $g(\bm y,z,\bm{\hat a})=\norm{\bm{\hat a} + \bm W(\bm y,z)}$. Notice that this is not the optimal glide ratio, as in principle, to maximize the glide ratio, one should increase the airspeed when moving upwind. \footnote{We choose not to optimize the glide ratio with respect to the airspeed since this would force us to specify the functional form of the glide polar $s:\R\mapsto (-\infty,0)$ of the aircraft. Moreover, it is reasonable to assume that a pilot would try to maintain a constant airspeed.} In the following we will refer to $U$ as the solution of GRRP, and to $U_G$ as its approximation returned by Glikonal-G. We run our experiments on a $101\times 101$ grid with horizontal and vertical spacings of $1$. In all experiments, the initial altitude is taken to be $z_0=100$. 

We start from a flat elevation profile, that is $h_\text{min}(\bm y)=0$ everywhere, and a uniform (i.e. constant in space) wind vector field with norm  $\norm{\bm W}=0.6$ and direction 240°. In this setting the analytic solution $U$ can be computed (see Appendix \ref{app:analytic_grrp_mrap}).
The exact solution is computed within a distance of 2.9 of the initial point to initialize the algorithm.
Figure \ref{fig:flat_glikonal_g} illustrates the results.

Panel (a) depicts the values of $z_0-U_G$, and its contour lines. The red dot is the initial position of the glider. As expected, the altitude decreases faster when travelling upwind, than downwind. The turquoise lines represent the optimal trajectories, and the red zone is the region that cannot be reached in gliding flight. In the absence of obstacles and with constant wind, the optimal trajectories are straight as expected under the analytic solution. Panel (b) shows the relative error $(U_G-U)/U$ of the algorithmic approximation. We observe that $U_G$'s relative error is always below 3\%. Moreover the error is always positive, implying that $U_G>U$, and therefore the algorithm errs on the conservative side, overestimating the lost altitude. In Appendix \ref{app:error_algo} the total and relative errors of the algorithm are computed in a number of settings. In all the cases we consider, the error is below 4\%, and it is always on the conservative side.
Finally in panel (c) the 3d  'gliding cone', i.e., the function $z_0-U_G$ is plotted. One can again observe how the cone is steeper in the upwind direction.

\begin{figure}[h]
    \centering
    \includegraphics[width=\textwidth]{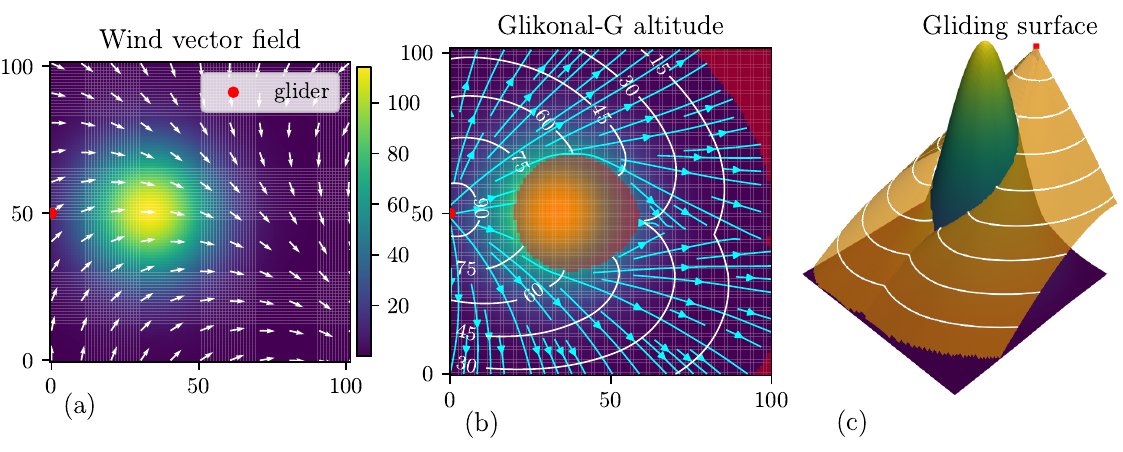}
    \caption{Glikonal-G's solution in the case of non-uniform wind and single mountain peak. The red dot represents the glider's initial position. \textbf{(a)} Wind direction (constant with altitude) plotted on top of the elevation profile (displayed as a heatmap). \textbf{(b)} Contour lines of the function $z_0-U_G$ (in white) and optimal trajectories (in turquoise). The heatmap is the elevation profile, and the red-shaded regions are those not reachable in gliding flight. \textbf{(c)} 3d representation of the function $z_0-U_G$, in orange and of $h_\text{min}$.}
    \label{fig:single_peak}
\end{figure}
We now turn to a more complex example. We consider an elevation profile $h_\text{min}$ with a single peak, whose heatmap is displayed in figure \ref{fig:single_peak} (a), together with the wind direction throughout space. We take the wind to be nonuniform both in intensity and direction. The wind intensity, $\norm{\bm W}$, instead starts at 0.54 at an altitude of 100 and linearly decreases to 0.48 at zero altitude.

The output of the Glikonal-G algorithm is displayed in panel (b). First, we remark that the optimal trajectories are forced to go around the obstacle, causing them to bend. The bending of the optimal trajectories is also caused by the varying wind: for example looking at the bottom left corner we clearly see that optimal paths turn clockwise. Panel (c) presents a 3d depiction of the function $z_0-U_G$, together with the elevation profile.
In Appendix \ref{app:additional_numerics} we present the results for another numerical experiment with artificial data, involving a mountain range with two saddles.

\subsubsection{GRRP on real data}
To conclude our study of the Glikonal-G algorithm, we present a real-world example, with elevation data taken from \cite{JAXA2021}. We consider a glider with a glide ratio of 20 in the absence of wind, moving at a fixed airspeed of 100km/h. For this experiment, we use a $295\times 305$ grid with a spacing of $150$ m.
The wind, represented with white arrows in figure \ref{fig:grrp_lstr} (a), is uniform and forms an angle of 30° with the south direction; its strength is $15$ km/h below $750$ m, then increases linearly to $40$ km/h at $2000$ m, and stays constant at higher altitudes. In the same panel, the elevation profile is shown. We require the glider to be at more than 200m above the ground in every point of the trajectory. The initial position of the glider is (46.59540N, 6.40050E) at an altitude of $z_0=2070$ m (red dot). The purple dot represents the airfield of Montricher, the closest one to the glider. The airfield is located on the opposite side of the Jura mountain range. 
Panel (b) depicts the contour lines of the function $z_0-U_G$ in black, and the optimal trajectories in blue. To reach the airfield, one must first pass the mountain range to the south and then turn sharply left. Since the straight path from the glider to the airfield is obstructed, we remark that any line-of-sight algorithm would fail to recognize that the airfield is within gliding range. Panel (c) shows a three dimensional visualization of $z_0-U_G$ and of the elevation profile.
\subsubsection{Computational efficiency}
Using a C++ implementation of Glikonal-G we characterized the running time of the algorithm. 
To compute both the optimal trajectories and the function $U_G$ the algorithm takes on average time of $0.16$ s and $0.14$ s respectively for figures \ref{fig:flat_glikonal_g} and \ref{fig:single_peak}. The experiments were run on a single core of an Intel Core i7-9750H on a MacBook Pro laptop. We can confidently claim that Glikonal-G can be run in real time on an aircraft's avionics.
\begin{figure}[h]
    \centering
    \includegraphics[width=0.8\textwidth]{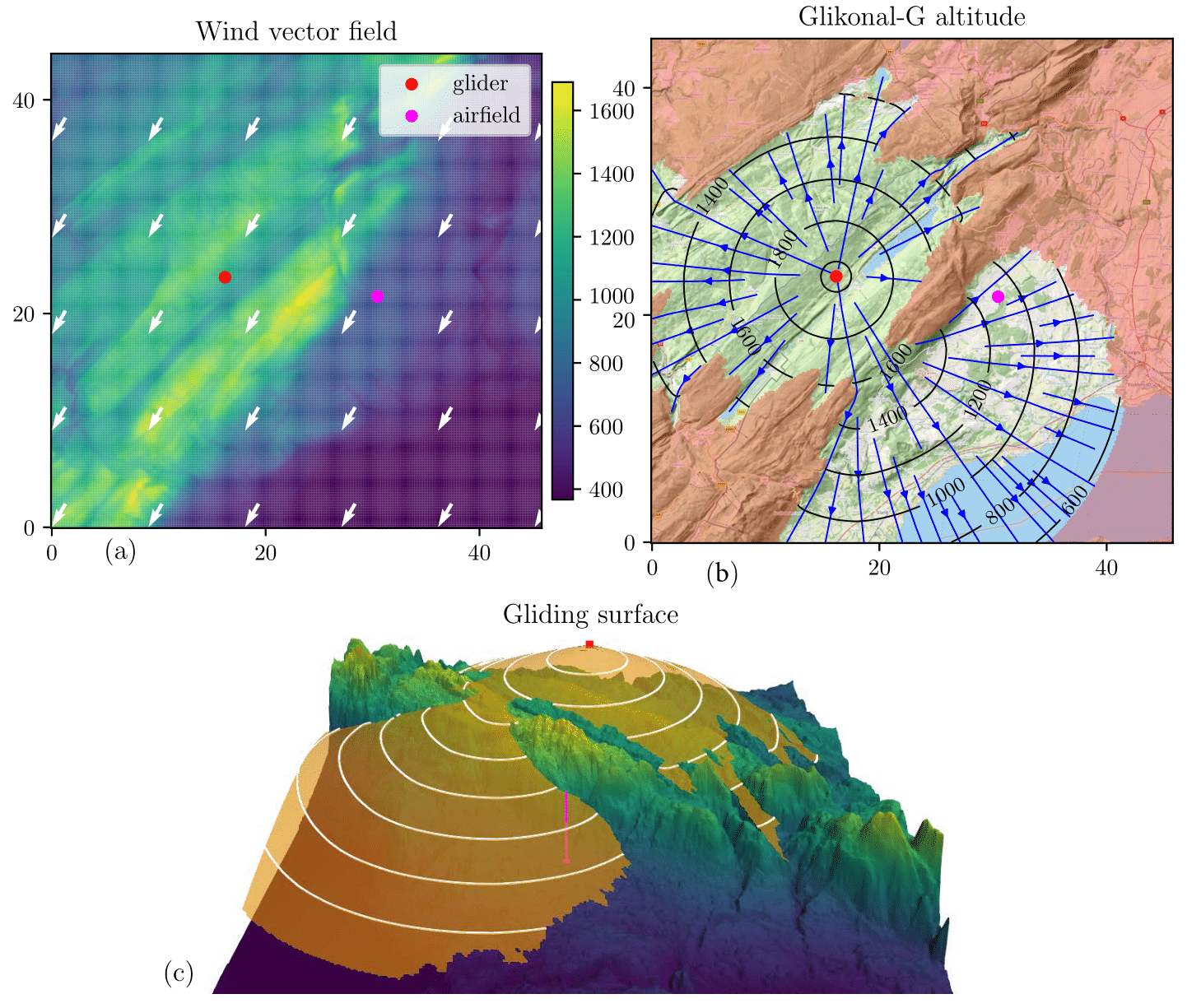}
    \caption{Glikonal-G on a real-world example. Horizontal distance is measured in kilometers, while the altitude is in meters (see colorbar). The red and purple dots represent respectively the glider's and airfield's positions. \textbf{(a)} Heatmap of the elevation profile of the Jura mountain range. The glider and the closest airfield are located on opposite sides of the range. The white arrows represent the uniform wind across the map. \textbf{(b)} Contour lines of the altitude function $z_0-U_G$ in black and optimal trajectories in blue. The red-shaded region is unreachable. Background map from \cite{OpenStreetMap}. \textbf{(c)} 3d representation of the altitude of the glider ($z_0-U_G$) in orange, on top of the elevation profile. for visualization purposes, the vertical axis is rescaled by 8 in comparison to the horizontal axes. The purple vertical line highlights the position of the airfield.}
    \label{fig:grrp_lstr}
\end{figure}
\subsection{Glikonal-M}
To test the Glikonal-M algorithm we use it to approximate the function $V$, defined in \eqref{eq:V_mrap}. We name $V_G$ the approximation of $V$ returned by the algorithm. We run our experiments on a $101\times 101$ grid with horizontal and vertical spacings of $1$. We start with the case of a flat terrain ($h_\text{min}=0$) and glide ratio equal to one. The top row in figure \ref{fig:mrap_flat_single_peak} depicts the results of this experiment. In (a) the contour lines and heatmap of $V_G$ are plotted, together with the feasible re-entry trajectories, which in this case extend radially from the airfield. Using the results in Appendix \ref{app:analytic_grrp_mrap} we compute the analytic solution $V$ and use it to plot the relative error $(V_G-V)/V$ in panel (b). The error is below 4\% in the whole space, moreover, $V_G$ is always greater than $V$, thus erring on the safe side. In Appendix \ref{app:error_algo} the error is computed for several different settings in which the analytical solution is available. In all the cases the error is below 5\% and always on the conservative side.
Panel (c) concludes this example by showing the function $V_G$, which takes the shape of a cone having the tip at the airfield. A glider above the cone will be able to reach the airfield, one below will not.

In the next setting, whose results are depicted in the bottom row of figure \ref{fig:mrap_flat_single_peak}, we consider the case of an elevation profile with a single peak. The glide ratio is again set to 1. In (d) the elevation is shown as a heatmap and the contour lines of $V_G$ are plotted on top. The turquoise lines are the feasible re-entry trajectories. Notice how the trajectories bend around the mountain to avoid it. Panel (e) presents a 3d plot of the elevation profile and of the function $V_G$ (in orange). Appendix \ref{app:additional_numerics} contains the results of another numerical experiment with artificial data, where the elevation profile includes a mountain range with two saddles. 
\begin{figure}[h]
    \centering
    \includegraphics[width=0.9\linewidth]{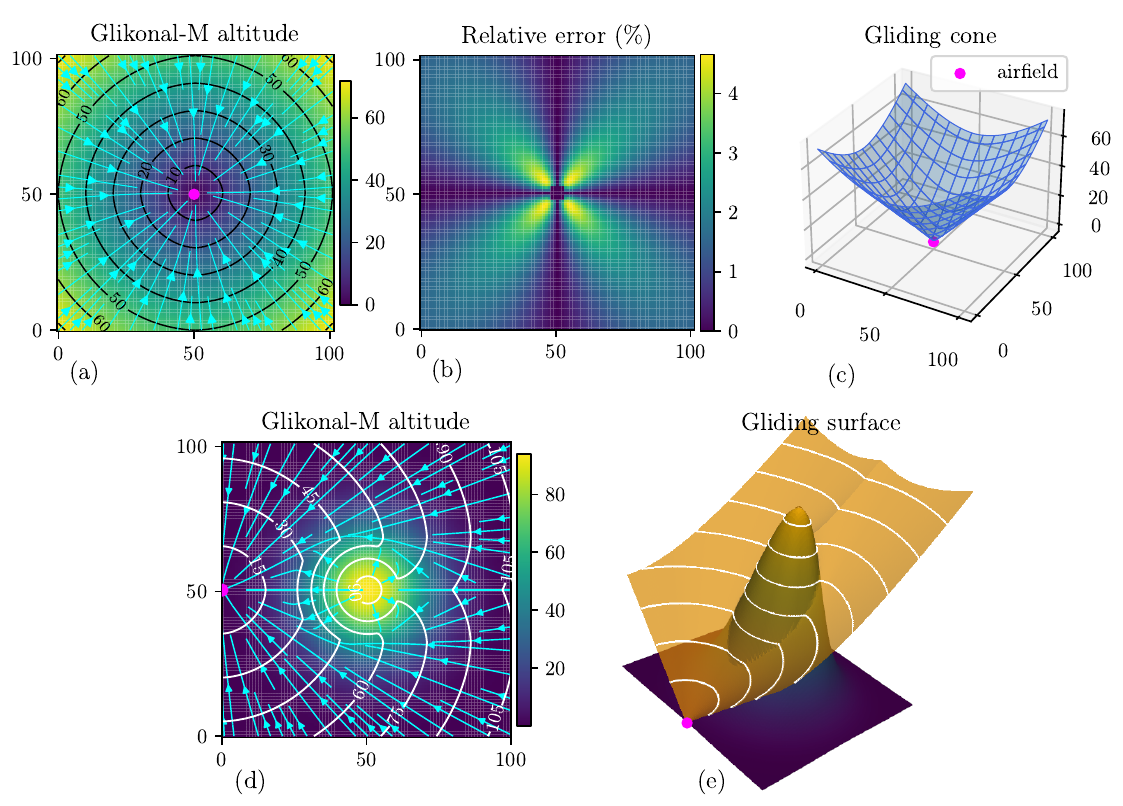}
    \caption{Numerical experiments on Glikonal-M. The purple dot indicates the position of the airfield. \textbf{Top row:} results on flat terrain. \textbf{(a)} Heatmap and contour lines of the function $V_G$. The turquoise lines are feasible re-entry trajectories to the airfield. \textbf{(b)} Relative error of the Glikonal-M solution, i.e. $(V_G-V)/V$. \textbf{(c)} 3d plot of the function $V_G$.
    \textbf{Bottom row:} elevation profile with a single mountain peak. \textbf{(d)} Heatmap of the elevation profile with contour lines of the function $V_G$ (in white) and re-entry trajectories (in turquoise). \textbf{(e)} 3d plot of the function $V_G$ in orange on top of the elevation profile.}
    \label{fig:mrap_flat_single_peak}
\end{figure}
\subsubsection{MRAP on real data}
In this experiment, we apply the Glikonal-M algorithm to study the minimal altitude needed to reach the airfield of Montricher in Switzerland, represented by a purple dot in figure \ref{fig:mrap_lstr}. The elevation profile of this region is given as a heatmap in figure \ref{fig:grrp_lstr} (a). The size of the grid and the spacing are the same as in the Glikonal-G case. $h_\text{min}$ is set to the terrain elevation plus $200$ m, and the glide ratio is 20.
In (a) the contour lines are plotted in black, and the re-entry trajectories in blue. There are no obstacles to the south and west of the airfield, hence the contour lines are circle arcs and the re-entry paths are just straight lines. To the northwest, the minimal return altitude is increased due to the presence of mountains, which one must pass to reach the airfield. Comparing the feasible paths in (a) with the elevation profile in \ref{fig:grrp_lstr} (a) we remark that the trajectories found by solving MRAP avoid passing on the points with highest elevation, and instead route the glider through the mountain passes.
\begin{figure}[h]
    \centering
    \includegraphics[width=0.9\linewidth]{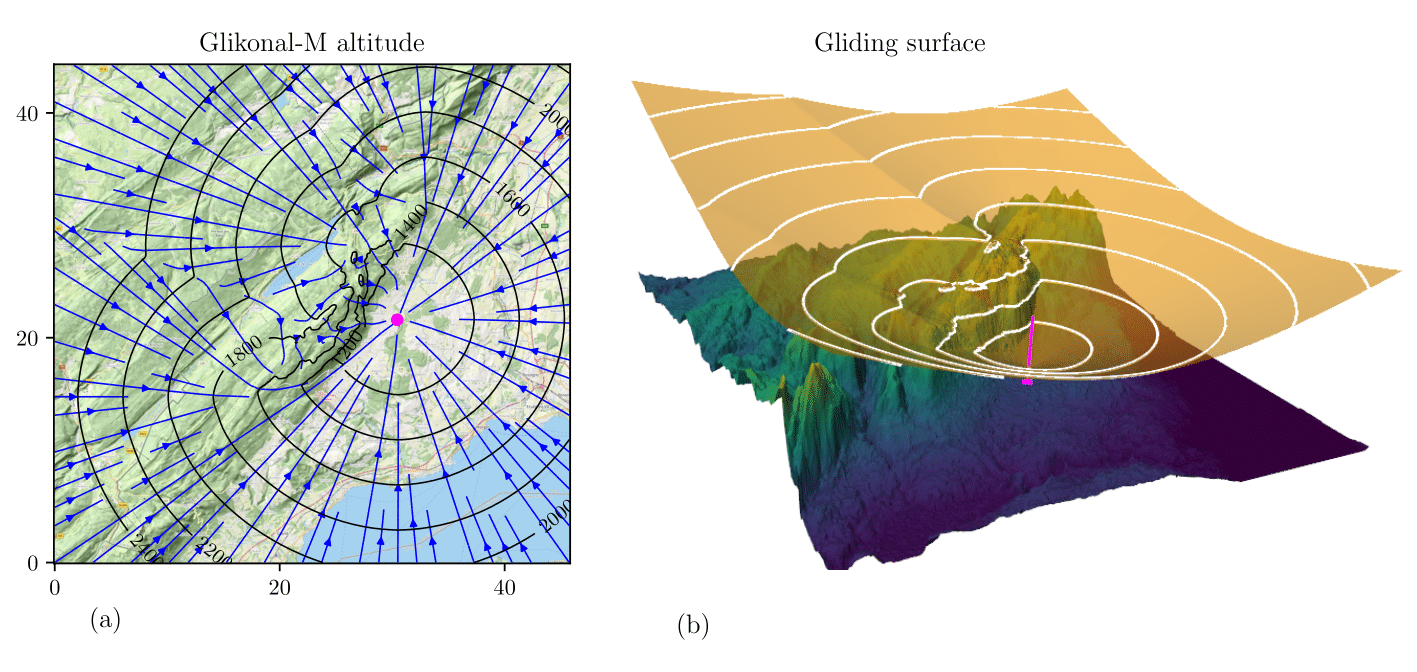}
    \caption{Glikonal-M on a real-world example. The horizontal distances are measured in kilometers. The purple dot represents the airfield's position. \textbf{(a)} Contour lines of the minimal altitude function $V_G$ in black and feasible trajectories in blue. \textbf{(b)} 3d representation of the function $V_G$ in orange, on top of the elevation profile. For visualization purposes, the vertical axis is rescaled by 8 in comparison to the horizontal axes. The purple vertical line highlights the position of the airfield.}
    \label{fig:mrap_lstr}
\end{figure}
\section{Conclusions}
In this work, we introduced the GRRP and MRAP problems and presented the Glikonal-G and Glikonal-M algorithms to solve them. These respectively compute the GRR from a given position and altitude, and the minimal altitude to glide to a given airfield. First, the aircraft dynamics is modeled by assuming that discontinuous changes in ground speed are allowed. Then an optimal control formulation of the aforementioned problems is presented. The HJB PDEs are derived, and then solved in a discretized form using Glikonal-G and Glikonal-M. The precision of the solution is verified in several settings, and results are presented both in the case of artificial and real-world data. 

Glikonal-G is fast enough to run in real time on an on-board computer and display the reachable region to the pilot throughout the flight. Moreover, it is more advanced than other algorithms on several fronts. First, it surpasses line-of-sight methods, which only consider straight gliding paths (at most with an initial turn to the desired heading). In several situations (see figures \ref{fig:grrp_lstr}, \ref{fig:grrp_mountain_range}, \ref{fig:single_peak}) the GRR found with these methods is incomplete. The optimal trajectories in Glikonal-G can instead turn around obstacles, therefore outputting the correct GRR. Second, it can handle any wind vector field, by having the glide ratio depend arbitrarily on the position, altitude, and course of the glider. 

Glikonal-M is able to solve MRAP for arbitrary elevation profiles. Knowing in advance the altitude needed to glide to an airfield can help pilots plan their flights and always have an available landing option in case of engine failure. An implementation of Glikonal-M which can deal with wind is the object of future research.

The main limitation of our aircraft dynamics model is its inability to account for the nonzero turning radius and the increased sink rate during turns at a fixed airspeed. We hope to overcome these impediments in future work, starting from accounting for the altitude loss in the initial turn.

\begin{acknowledgments}
I am grateful to the people of Montricher's airfield for inspiring this work. I would also like to thank Lenka Zdeborová for granting me the freedom to work on this project.
\end{acknowledgments}

\bibliography{apssamp}

\appendix
\section{Optimal control formulation of MRAP}
\label{app:mrap}
 We will first define MRAP in terms of the minimal altitude for which the airport falls within the GRR. Then we show how MRAP can be formulated as an optimal control problem for an ascending dynamics that from the minimal altitude over the airport climbs to the position of the aircraft. This corresponds to a time reversal of the descending dynamics.
\subsection{MRAP from GRRP}
In MRAP we want to compute the minimal altitude over a position $\bm y\in \R^2$ to safely return to an airport at position $\bm x_a\in\R^2$. 
We encode this in the minimal return function $V:\R^2\mapsto \R$ defined as 
\begin{equation}
    V(\bm y)=\min_{z_0} \left\{ z_0 \text{ s.t. } \bm x_a\in \text{GRR}(\bm y,z_0)\right\},
\end{equation}
Where GRR($\bm y,z_0$) is the set of positions safely reachable in gliding flight when starting at altitude $z_0$ over $\bm y$, as defined in \eqref{eq:GRR}.
More explicitly this can be written as 
\begin{align}
\label{eq:mrap_app}
    V(\bm y)&=\min_{z_0, \bm a\in \mathcal F} z_0 \,\\
    \label{eq:constraints_mrap}
    \text{subject to: eq. \eqref{eq:dyn_h}}, \,\bm x(0)=\bm y, \,\bm x(h_f)=\bm x_a, \,&z_0-h\geq h_\text{min}(\bm x(h)) \,\forall h\in [0,h_f],\,\bm a(h)\in \mathcal A_{gs}(\bm r(h)) \,\forall h\in[0,h_f],
\end{align}
where $h_f$ here is a free parameter and represents the altitude loss to reach the airport.
MRAP basically consists of computing $V$ in the whole space. Intuitively, the further we are from $\bm x_a$, the larger $V$. However, this formulation cannot be readily translated into an algorithm that computes the $V$, since we're constraining the final point of the dynamics instead of the initial one. For this reason, we now present a sequence of equivalent problems with the aim of translating \eqref{eq:mrap_app} into a problem with a convenient algorithmic solution.

Consider the relaxed aircraft dynamics 
\begin{equation}
\label{eq:dyn_h_relaxed}
    \frac{d\bm x}{dh} (h)\in\left\{\lambda \Tilde g(\bm x(h),z_0-h,\bm a(h))\bm{\hat a}(h) \text{ s.t. }\lambda\in[0,1]\right\}.
\end{equation}
with the usual definition $\bm{\hat a}=\bm a/\norm{\bm a}$ and $h$ being the lost altitude. 
This dynamics is identical to \eqref{eq:dyn_h} except for the addition of the factor $\lambda$ which allows the glide ratio to be smaller than $\Tilde g$. This means that the aircraft can descend with glide ratio at most $\Tilde g$, but can also descend more steeply. 

Define $V_2:\R^2\mapsto \R$ is the same exact way as the function $V$ (i.e. using \eqref{eq:mrap_app}) but with the relaxed dynamics \eqref{eq:dyn_h_relaxed} in place of \eqref{eq:dyn_h} in the constraints \eqref{eq:constraints_mrap}.
In formulas
\begin{align}
\label{eq:mrap_app_V2}
    V_2(\bm y)&=\min_{z_0, \bm a\in \mathcal F,\bm x\in \mathcal F} z_0 \,\\
    \label{eq:constraints_mrap_V2}
    \text{subject to: eq. \eqref{eq:dyn_h_relaxed}}, \,\bm x(0)=\bm y, \,\bm x(h_f)=\bm x_a, \,&z_0-h\geq h_\text{min}(\bm x(h)) \,\forall h\in [0,h_f],\,\bm a(h)\in \mathcal A_{gs}(\bm r(h)) \,\forall h\in[0,h_f],
\end{align}
Notice that in this case specifying $\bm a$ does not uniquely determine a curve $\bm x$, for this reason in \eqref{eq:mrap_app_V2} we take the minimum also over possible curves  $\bm x:\R_+\mapsto \R^2$.

We now prove that $V_2(\bm y)=V(\bm y)$ for all $\bm y$.

\begin{lemma}
    For all $\bm y\in\R^2$, $V_2(\bm y)=V(\bm y)$. 
\end{lemma}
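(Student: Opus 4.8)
The plan is to establish the two inequalities $V_2(\bm y)\le V(\bm y)$ and $V(\bm y)\le V_2(\bm y)$ separately, in each case by comparing the set of initial altitudes $z_0$ that admit a feasible trajectory. The bound $V_2\le V$ is immediate: setting $\lambda\equiv 1$ turns the relaxed dynamics \eqref{eq:dyn_h_relaxed} into the original dynamics \eqref{eq:dyn_h}, so every triple $(\bm x,\bm a,z_0)$ feasible for \eqref{eq:mrap_app} is also feasible for \eqref{eq:mrap_app_V2}. Minimizing the common objective $z_0$ over the weakly larger feasible set defining $V_2$ can only lower the value.

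The content lies in the reverse bound $V(\bm y)\le V_2(\bm y)$. Here I would take an almost optimal relaxed trajectory $(\bm x,\bm a,z_0)$ on $[0,h_f]$ with $z_0$ close to $V_2(\bm y)$ and build, from its horizontal trace, a trajectory feasible for the original problem \eqref{eq:mrap_app} with the \emph{same} initial altitude $z_0$. Concretely, I would reparametrize $h\mapsto\bm x(h)$ by arclength, obtaining a unit-speed curve $\bm\gamma:[0,L]\to\R^2$ from $\bm y$ to $\bm x_a$, and re-traverse $\bm\gamma$ with the original dynamics while always choosing the airspeed that realizes the maximal glide ratio $g$ in the tangent direction. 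Writing $h(\ell)$ for the altitude lost by the relaxed trajectory upon reaching arclength $\ell$ and $\tilde h(\ell)$ for that lost by the reconstructed original trajectory, one has $dh/d\ell=1/(\lambda\,\tilde g)\ge 1/g=d\tilde h/d\ell$ whenever both are evaluated at a common altitude, since $\lambda\tilde g\le\tilde g\le g$ and the heading is the common tangent of $\bm\gamma$. The two consequences I need are: (i) the reconstructed path reaches $\bm x_a$ having lost no more altitude, $\tilde h(L)\le h(L)=h_f$, so it is admissible for \eqref{eq:mrap_app} starting at $z_0$; and (ii) it inherits the obstacle constraint, because $\tilde h(\ell)\le h(\ell)$ gives $z_0-\tilde h(\ell)\ge z_0-h(\ell)\ge h_\text{min}(\bm\gamma(\ell))$ pointwise.

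The step that needs care — and the reason the relaxation is genuinely useful — is establishing $\tilde h(\ell)\le h(\ell)$ for all $\ell$ once the glide ratio $g$ depends on altitude (as it does under wind varying with $z$): the two altitude-loss functions then solve \emph{different} ODEs, since $g$ is evaluated at $z_0-h(\ell)$ versus $z_0-\tilde h(\ell)$. I would close this with a differential-inequality argument: both functions vanish at $\ell=0$, and at any arclength $\ell^{*}$ where $h-\tilde h$ first returns to zero the two right-hand sides are evaluated at the same altitude and heading, so $dh/d\ell\ge d\tilde h/d\ell$ there and $h-\tilde h$ cannot cross zero downward. Finally I would record the regularity caveats that make the reparametrization legitimate: assuming $g$ is bounded on the relevant region the relaxed path is Lipschitz, hence rectifiable; intervals on which $\lambda=0$ contribute no arclength and only relax the altitude budget; and since the controls need only be measurable, $\bm\gamma$ together with the reconstructed airspeed law defines an admissible element of $\mathcal F$. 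Letting $z_0\to V_2(\bm y)$ then yields $V(\bm y)\le V_2(\bm y)$, and combined with the easy direction this gives the claimed equality.
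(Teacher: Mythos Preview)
Your proof is correct and follows essentially the same route as the paper's: both arguments take the horizontal trace of a relaxed trajectory and refly it under the unrelaxed dynamics, then use a differential inequality (evaluate the two altitude-loss derivatives at a first contact point) to show the reconstructed flight stays at least as high, hence inherits the terrain constraint. The only cosmetic differences are that you parametrize by arclength while the paper reparametrizes via $\bar h^{-1}$, and that you select the optimal-glide airspeed along the trace while the paper keeps the same control $\bm a(\cdot)$ and works with $\tilde g$; neither changes the logic.
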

\begin{proof}
The set of possible paths allowed by the relaxed dynamics \eqref{eq:dyn_h_relaxed} contains the set of paths allowed under the \eqref{eq:dyn_h}. Therefore $V_2\leq V$.
We now prove that $V\leq V_2$.
The idea is very simple: given a path that is steeper than $1/g$ is some section, we can always replace it with a path that has the same horizontal projection, (i.e. passes through the same points in the horizontal plane) but maintains a higher altitude than the original path by having steepness exactly $1/g$. If the original path satisfies the constraints, so does the modified path, since it passes higher than the first. Then taking the minimum over paths with the dynamics \eqref{eq:dyn_h_relaxed} is equivalent to taking the minimum over paths with the dynamics \eqref{eq:dyn_h}.

We now formalize the above.
Consider a path $\bm x: \R_+\mapsto \R^2$ satisfying \eqref{eq:constraints_mrap_V2}.
We now reparametrize $\bm x$ in order to satisfy the non-relaxed dynamics.
Define $\Bar h$ as the minimal altitude the aircraft can lose when following the path $\bm x$. $\Bar h$ obeys the equation
\begin{equation}
    \Bar{h}(h)=\int_0^h dh'\frac{\norm{d\bm x/dh(h')}}{\Tilde g(\bm x(h'),z_0-\Bar{h}(h'),\bm a(h'))}
\end{equation}
Notice this cannot be used to directly to compute $\Bar h$, since it appears on both sides of the expression. We introduce the reparametrized path $\Bar{\bm x}(h')\coloneqq\bm x(\Bar h^{-1}(h'))$. By computing its derivative we get
\begin{equation}
    \frac{d\Bar{\bm x}}{dh'}(h')=\frac{d\bm x}{dh}(\Bar h^{-1}(h'))\frac{d\Bar h^{-1}}{dh'}(h')=\bm{\hat a} \,\Tilde g(\bm x(h'),z_0-h',\bm a(h')),
\end{equation}
and thus $\Bar{\bm x}$ satisfies the dynamics \eqref{eq:dyn_h}.
At this point, we remark that $\Bar{h}(h)\leq h$. To see this suppose that for a certain $h$, $\Bar{h}(h)=h$ (this happens for example when $h=\Bar{h}=0$). Then deriving $\Bar{h}$ we get
\begin{equation}
    \frac{d\Bar{h}}{dh}(h)=\frac{\norm{d\bm x/dh(h)}}{\Tilde g(\bm x(h),z_0-\Bar{h}(h))}=\frac{\norm{d\bm x/dh(h)}}{\Tilde g(\bm x(h),z_0-h)}\leq 1,
\end{equation}
where in the second passage we used that $\Bar{h}(h)=h$, and the inequality holds because $\norm{d\bm x/dh}\leq g(\bm x(h),z_0-h,\bm a(h))$, according to \eqref{eq:dyn_h_relaxed}.

Since $\Bar{h}(h)\leq h$, $\Bar{\bm x}$ satisfies the terrain avoidance constraints if $\bm x$ does, since $\Bar{\bm x}$ passes at least as high as $\bm x$.

The consequence of this is that if there exists an $\bm x$ satisfying \eqref{eq:constraints_mrap_V2}, then there exists a reparametrization $\Bar{\bm x}$ of $\bm x$ that still satisfies \eqref{eq:constraints_mrap}.
Therefore $V_2 \leq V$.
\end{proof}

Now that the equality between $V$ and $V_2$ has been established, we shall introduce the time-reversed problem. This dynamics starts at an altitude $\Bar z_0$ and climbs with steepness at least $g$. $h$ here represents the \textit{gained} altitude from the initial point.

\begin{equation}
\label{eq:dyn_h_relaxed_reversed}
    \frac{d\bm x}{dh} (h)\in\left\{\lambda \Tilde g(\bm x(h),\Bar z_0+h,-\bm a(h))\bm{\hat a}(h) \text{ s.t. }\lambda\in[0,1]\right\}.
\end{equation}
We now introduce the function $V_3:\R^2\mapsto \R$ which represents the minimal altitude at which the climbing aircraft can reach a point. The optimal control problem for $V_3$ is
\begin{align}
\label{eq:mrap_app_V3}
    V_3(\bm y)&=\min_{\Bar z_0, \bm a\in \mathcal F,\bm x\in \mathcal F} \Bar z_0+h_f \,\\
    \label{eq:constraints_mrap_V3}
    \text{subject to: eq. \eqref{eq:dyn_h_relaxed_reversed}}, \,\bm x(0)=\bm x_a, \,\bm x(h_f)=\bm y, \,&\Bar z_0+ h\geq h_\text{min}(\bm x(h)) \,\forall h\in [0,h_f],\,\bm a(h)\in -\mathcal A_{gs}(\bm r(h)) \,\forall h\in[0,h_f],
\end{align}
where $-\mathcal A_{gs}(\bm r(h))=\left\{-\bm a, \; \bm a\in \mathcal A_{gs}(\bm r(h))\right\}$.
In this problem the aircraft starts at an altitude $z_0$ over the airport and then after climbing $h_f$ meters, it reaches $\bm y$. Let us say it once more: $h$ here is the \textit{gained} altitude. The dynamics \eqref{eq:dyn_h_relaxed} imposes that the aircraft must climb with steepness at least $g$.
We now prove that $V_3(\bm y)=V_2(\bm y)$, for all $\bm y$.
\begin{lemma}
     For all $\bm y\in\R^2$, $V_3(\bm y)=V_2(\bm y)$. 
\end{lemma}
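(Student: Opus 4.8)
The plan is to prove $V_2=V_3$ by exhibiting an objective-preserving bijection between the feasible trajectories of the two problems, realized by \emph{time reversal}. The heuristic is that any admissible descending trajectory for $V_2$, leaving $\bm y$ at altitude $z_0$ and arriving at $\bm x_a$ at altitude $z_0-h_f$, can be run backwards to produce an admissible climbing trajectory for $V_3$ that leaves $\bm x_a$ at altitude $z_0-h_f$ and arrives at $\bm y$ at altitude $z_0$; the final altitude of the climb, $\bar z_0+h_f$, then equals the starting altitude $z_0$ of the descent, so the two objectives coincide. Since time reversal is an involution, it suffices to show it maps feasible trajectories to feasible ones in one direction while preserving the cost: applying it to both problems yields the two inequalities $V_3\le V_2$ and $V_2\le V_3$ at once.

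Concretely, I would start from a trajectory $\bm x:[0,h_f]\to\R^2$ with control $\bm a$ and initial altitude $z_0$ that is feasible for $V_2$ (recall $h$ is the lost altitude, so the altitude at parameter $h$ is $z_0-h$). I define the reversed data by $\bm y(h)\coloneqq\bm x(h_f-h)$, the control $\bm b(h)\coloneqq-\bm a(h_f-h)$, and the base altitude $\bar z_0\coloneqq z_0-h_f$ (here $h$ is the \emph{gained} altitude, so the altitude at parameter $h$ is $\bar z_0+h$). The routine checks are immediate: $\bm y(0)=\bm x(h_f)=\bm x_a$ and $\bm y(h_f)=\bm x(0)=\bm y$ match the boundary conditions of \eqref{eq:constraints_mrap_V3}; the altitude seen by the reversed path at parameter $h$ is $\bar z_0+h=z_0-(h_f-h)$, exactly the altitude of the original path at parameter $h_f-h$, so both trajectories trace the same points in $\R^2$ at the same altitudes; and the cost is preserved since $\bar z_0+h_f=z_0$.

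The heart of the argument is verifying that the reversed trajectory obeys the relaxed climbing dynamics \eqref{eq:dyn_h_relaxed_reversed}. Differentiating gives $\frac{d\bm y}{dh}(h)=-\frac{d\bm x}{dh}(h_f-h)$ a.e.; writing the right-hand side using \eqref{eq:dyn_h_relaxed} with some $\lambda(h)\in[0,1]$ and using $\hat{\bm b}(h)=-\hat{\bm a}(h_f-h)$, the sign is absorbed into the direction ($-\hat{\bm a}(h_f-h)=\hat{\bm b}(h)$) and the \emph{same} $\lambda(h)$ survives. The one point needing care is the scalar factor $\tilde g$: its third argument in \eqref{eq:dyn_h_relaxed_reversed} is $-\bm b(h)=\bm a(h_f-h)$, and its position/altitude arguments are $(\bm y(h),\bar z_0+h)=(\bm x(h_f-h),z_0-(h_f-h))$, so it evaluates to exactly the glide ratio the original descent had at parameter $h_f-h$. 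This is precisely why the climbing dynamics is written with $-\bm a$ and why the admissible set becomes $-\mathcal A_{gs}$: reversing the direction of travel while flipping the sign of the ground-speed control keeps the airspeed relative to the wind, hence the glide ratio, unchanged. I expect this sign-and-reference-altitude bookkeeping around $\tilde g$ to be the main (though purely mechanical) obstacle.

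Finally I would discharge the remaining feasibility conditions. The control constraint $\bm b(h)\in-\mathcal A_{gs}(\bm r(h))$ holds because $-\bm b(h)=\bm a(h_f-h)\in\mathcal A_{gs}$ at the matching position and altitude, and measurability of $\bm b$ is clear since $h\mapsto h_f-h$ is continuous. The terrain-avoidance constraint transfers verbatim: at parameter $h$ it reads $\bar z_0+h\ge h_\text{min}(\bm y(h))$, which is the original constraint $z_0-h'\ge h_\text{min}(\bm x(h'))$ at $h'=h_f-h$. Thus the reversed trajectory is feasible for $V_3$ with cost $z_0$, giving $V_3\le V_2$. Applying the identical construction to a $V_3$-feasible trajectory — set $\bm x(h)=\bm y(h_f-h)$, $z_0=\bar z_0+h_f$, $\bm a(h)=-\bm b(h_f-h)$ — yields the reverse inequality, and I would close by noting that the two constructions are mutually inverse, so the correspondence is a genuine bijection and $V_2=V_3$.
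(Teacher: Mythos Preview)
Your proposal is correct and follows essentially the same approach as the paper: establish that time reversal $h\mapsto h_f-h$ gives a bijection between $V_2$-feasible and $V_3$-feasible trajectories that preserves the cost $z_0=\bar z_0+h_f$, by checking that the boundary conditions, the terrain-avoidance constraint, and the relaxed dynamics all transfer under the substitution $\bm b(h)=-\bm a(h_f-h)$. If anything, your write-up is slightly more careful than the paper's in explicitly discharging the control constraint $\bm b(h)\in-\mathcal A_{gs}(\bm r(h))$ and in noting that the construction is an involution, but the argument is the same.
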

\begin{proof}
    First notice that $\Bar{z}_0+h_f=z_0$, thus we're minimizing the same objective function. We're left to prove that the constraints \eqref{eq:constraints_mrap_V2} and \eqref{eq:constraints_mrap_V3} define the same set of paths. To do this we will prove that a path satisfies \eqref{eq:constraints_mrap_V2} if and only if its time reversal satisfies \eqref{eq:constraints_mrap_V3}.
    
    Let $\bm x_2(h)$ be a path satisfying \eqref{eq:constraints_mrap_V2}.
    Consider the time reversed path $\bm x_3(h)\coloneqq \bm x_2(h_f-h)$.
    \begin{itemize}
        \item $\bm x_2(0)=\bm y\; \iff \;\bm x_3(h_f)=\bm y$
        \item $\bm x_2(h_f)=\bm x_a\; \iff \;\bm x_3(0)=\bm x_a$
        \item We now check that $\bm x_2$ satisfies the terrain avoidance constraint if and only if $\bm x_2$ does. Recall that $z_0=\Bar z_0+h_f$.
        \begin{align}
            &z_0-h\geq h_\text{min}(\bm x_2(h)),\, \forall h\in[0,h_f]\iff \Bar z_0+h_f-h\geq  h_\text{min}(\bm x_2(h)),\, \forall h\in[0,h_f]\iff \\&\Bar z_0+h_f-h\geq  h_\text{min}(\bm x_3(h_f-h)),\, \forall h\in[0,h_f]\iff \Bar z_0+h'\geq  h_\text{min}(\bm x_3(h')),\, \forall h'\in[0,h_f],
        \end{align}
        where in the last passage we renamed $h'=h_f-h$.
        \item  We shall now verify that  $\bm x_2$ obeys \eqref{eq:dyn_h_relaxed} iff $\bm x_3$ obeys \eqref{eq:dyn_h_relaxed_reversed}.
        \begin{align}
            &\frac{d\bm x_3}{dh} (h)=-\frac{d\bm x_2}{dh} (h_f-h)\in\left\{-\lambda \Tilde g(\bm x_2(h_f-h),z_0-h_f+h,\bm a(h_f-h))\bm{\hat a}(h) \text{ s.t. }\lambda\in[0,1]\right\}=\\&\left\{-\lambda \Tilde g(\bm x_3(h),\Bar z_0+h,\bm a(h))\bm{\hat a}(h) \text{ s.t. }\lambda\in[0,1]\right\}=\left\{\lambda \Tilde g(\bm x_3(h),\Bar z_0+h,-\bm a'(h))\bm{\hat a}'(h)\text{ s.t. }\lambda\in[0,1]\right\},
        \end{align}
        where $\bm a'=-\bm a$.
        In the second passage we used \eqref{eq:dyn_h_relaxed}, and in the third we used that $z_0=\Bar z_0+h_f$. In the final passage, we obtained \eqref{eq:dyn_h_relaxed_reversed} by taking into account that the vector $\bm a$ should be reversed since we're travelling in the opposite direction.
    \end{itemize}
\end{proof}
To summarize, in this section we proved that the more intuitive formulation of MRAP given by \eqref{eq:mrap_app} is equivalent to the problem \eqref{eq:mrap_app_V3}. 
We've proved that all the constraints are equivalent. Together with the fact that the objective function is the same this proves that the two optimization problems have the same optimal value.
\subsection{Derivaton of HJB equations for MRAP}
To better understand this derivation we advise the reader to first go through section \ref{sec:GRRP_theory}.
In this section, we rename $V_3\to V$ in order to simplify the notation.
Let $\bm x:[0,h_f]\mapsto \R^2$ be the path joining $\bm x(0)=\bm x_a$ and $\bm x(h_f)=\bm y$ that achieves the minimum in the problem \eqref{eq:mrap_app_V3}.
Then we can apply Bellman's optimality principle. In the following we name $\bm \nu \coloneqq \frac{d\bm x}{dh}(h_f)$. Notice $\bm \nu\in \R^2$.
\begin{align}
    V(\bm y)&= \min_{\Bar z_0, \bm x\in \mathcal F,\bm a\in \mathcal F} \Bar z_0+\int_0^{h_f} dh=V(\bm x(h_f-\epsilon))+\min_{\bm x\in \mathcal F,\bm a \in \mathcal F}  \int_{h_f-\epsilon}^{h_f} dh\\& =V(\bm y)+\min_{\bm a\in \R^2 ,\bm \nu\in \R^2}\left(-\nabla V(\bm y)\cdot \bm \nu+1\right)\epsilon+ O(\epsilon^2).
    \label{eq:HJB_deriv_mrap}
\end{align}
In the last step, the minimization is subject to the constraints\\
\begin{equation}
\label{eq:constraints_nu_a}
    \bm \nu\in \left\{\lambda \Tilde g(\bm y,\Bar z_0+h_f,-\bm a(h_f))\bm{\hat a}(h_f) \text{ s.t. }\lambda\in[0,1]\right\},\quad \bar z_0+h_f\geq h_\text{min}(\bm x(h_f)), \quad \bm a\in -\mathcal A_{gs}(\bm r(h))\,.
\end{equation}
From now on all the minima over $\bm a,\bm \nu$ are understood to be subject to these constraints. Since \eqref{eq:HJB_deriv_mrap} must hold for any small $\epsilon$, the term proportional to $\epsilon$ must vanish therefore giving
\begin{equation}
    \label{eq:HJB_mrap_implicit}
    \min_{\bm a,\bm \nu} (-\nabla V(\bm y))\cdot \bm \nu =-1
\end{equation}
For a given $\bm a$, let $C(\bm a)\subseteq \R^2$ be the set of values of $\bm \nu$ that satisfy the first two constraints in \eqref{eq:constraints_nu_a}. 
We now take the minimum with respect to $\bm \nu$ explicitly.
\begin{align}
    &\min_{\bm a,\bm \nu} (-\nabla V(\bm y))\cdot \bm \nu = \min_{\bm a\in -\mathcal A_{gs}(\bm r(h))}\min_{\nu\in C(\bm a)} (-\nabla V(\bm y))\cdot \bm \nu=\min_{\bm a\in \mathcal -A_{gs}(\bm r(h))}(-\nabla V(\bm y))\cdot \bm{\hat a}\max_{\bm \nu \in C(\bm a)} \norm{\bm \nu}
\end{align}
  In the second step we supposed that $\nabla V\cdot\bm{\hat a}> 0$, this is true when the minimum over $\bm a$ is taken. We also used that $\bm\nu=\bm{\hat a}\norm{\bm \nu}$.
\begin{align}
    \max_{\bm \nu \in C(\bm a)} \norm{\bm \nu}=
    \begin{cases}
    \left[\max\left(1/\Tilde g(\bm y,\Bar z_0+h_f,-\bm a), \;\nabla h_\text{min}(\bm y)\cdot \bm{\hat a}\right)\right]^{-1} & \text{if } V(\bm y)=h_\text{min}(\bm y) \\
    \Tilde g(\bm y,\Bar z_0+h_f,-\bm a), & \text{if } V(\bm y)>h_\text{min}(\bm y) 
  \end{cases}
  \end{align}

 The by-case solution is needed to appropriately enforce that the aircraft does not fall below $h_\text{min}$.
  The second case is the simplest. If the aircraft is above $h_\text{min}$ then it shall continue climbing with steepness $1/\Tilde g$. 
  If instead the aircraft is at $h_\text{min}$, then continuing the climb with a steepness of $1/\Tilde g$ might result in falling below $h_\text{min}$ in case the terrain rises more steeply. 
  When going in direction $\bm{\hat a}$ the terrain steepness is given by $\nabla h_\text{min}(\bm y)\cdot \bm{\hat a}$. Since the steepness of the path is $1/\norm{d\bm x/dh}=1/\norm{\bm \nu}$, we have that $1/\norm{\bm \nu}\geq \max\left(1/\Tilde g,\nabla h_\text{min}(\bm y)\cdot \bm{\hat a}\right)$.
  
  As we did in the case of GRRP we now split the maximum over $\bm a$ into a maximization over $\bm{\hat a}$ and $\norm{\bm a}$. Doing so and taking the max over $\norm{\bm{a}}$ explicitly we get
  \begin{equation}
      \max_{\norm{\bm a}} \max_{\bm \nu \in C(\norm{\bm a}\bm{\hat a})} \norm{\bm \nu} =\begin{cases}
    \left[\max\left(1/g(\bm y,\Bar z_0+h_f,-\bm{\hat a}), \;\nabla h_\text{min}(\bm y)\cdot \bm{\hat a}\right)\right]^{-1} & \text{if } V(\bm y)=h_\text{min}(\bm y) \\
     g(\bm y,\Bar z_0+h_f,-\bm{\hat a}), & \text{if } V(\bm y)>h_\text{min}(\bm y)\,,
  \end{cases}
  \end{equation}
  with the definition $g(\bm y, z, \bm{\hat a})\coloneqq \max_{\substack{\norm{\bm a}\in\R_+ \\\bm a\in-\mathcal{A}_{gs}((\bm y,z)) }}\Tilde g(\bm y,z,\bm{\hat a}\norm{\bm a})$.

  Plugging this back into \eqref{eq:HJB_mrap_implicit}, using that $h_f=V(\bm y)$, and that $\min_x f(x)=-\max_x -f(x)$, we get the final form of the HJB equations for MRAP, presented in the main text in \eqref{eq:HJB_mrap}:
  \begin{align}
    \label{eq:HJB_mrap_app}
      &\begin{cases}
    \max_{\bm{\hat a}\in S^1}\nabla V(\bm y)\cdot \bm{\hat a}\,\left[\max\left(1/g(\bm y,\Bar z_0+h_f,-\bm{\hat a}), \;\nabla h_\text{min}(\bm y)\cdot \bm{\hat a}\right)\right]^{-1}=1 & \text{if } V(\bm y)=h_\text{min}(\bm y) \\
     \max_{\bm{\hat a}\in S^1}\nabla V(\bm y)\cdot \bm{\hat a}\,g(\bm y, V(\bm y),-\bm{\hat a})=1, & \text{if } V(\bm y)>h_\text{min}(\bm y)\,,
  \end{cases}\\
  &V(\bm x_a)=h_\text{min}(\bm x_a).
  \end{align}
  In the equation we replaced $\Bar z_0$ with $h_\text{min}(\bm x_a)$, since at optimality this is the minimal altitude over $\bm x_a$.
  \subsection{Simplifications of the HJB equations for MRAP}
  \label{sec:simplifications_hjb_mrap}
 In this section, we aim to derive simplified forms of \eqref{eq:HJB_mrap_app}, with the goal of better understanding it.
 We start from the one dimensional case, without wind. When $V>h_\text{min}$ the equation is $|dV/dy(y)|>1/g$. We now consider the case $V=h_\text{min}$.
 \begin{itemize}
     \item If $y\geq x_a$, then $\hat a=1$ and $V$ obeys $\frac{dV}{dy}(y)=\max(1/g,\frac{dh_\text{min}}{dy}(y))$.
     \item If $y<x_a$, then $\hat a=-1$ and $V$ obeys $\frac{dV}{dy}(y)=-\max(1/g,-\frac{dh_\text{min}}{dy}(y))$
 \end{itemize}
Putting the two cases together yields the final form of the equations in one dimension:
  \begin{equation}
      \left|\frac{dV}{dy}(y)\right|=\begin{cases} 1/g  & \text{if } V(y)>h_\text{min}(y)\\
      \max\left(\left|\frac{dh_\text{min}}{dy}(y)\right|,1/g\right) & \text{if } V(y)=h_\text{min}(y).
      \end{cases}
  \end{equation}
  Figure \ref{fig:mrap_1d_sketch} shows the behavior of $V$ in a one-dimensional setting. 
  From the figure we observe that $V$ grows with steepness $1/g$ until it gets to the same height of $h_\text{min}$. Then it starts tracking $h_\text{min}$ as long as $dh_\text{min}(y)/dy>1/g$.  This captures a central feature of MRAP. In the absence of obstacles and with constant wind, $V$ forms a cone around $\bm x_a$ with steepness $1/g$.
  From this figure, we notice an important feature of the solution: even if $h_\text{min}$ is differentiable, $V$ is not. In the points where we transition from the cone solution to tracking the terrain the derivative of $V$ is discontinuous. 
  \begin{figure}
      \centering
      \includegraphics[width=0.8\textwidth]{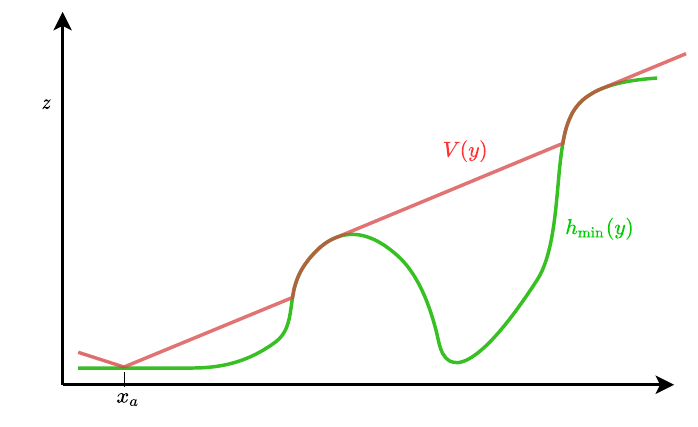}
      \caption{Depiction of the one dimensional solution to \eqref{eq:HJB_mrap_app}. When $V(y)>h_\text{min}(y)$, $|dV/dy|=1/g$. Instead when $V(y)=h_\text{min}(y)$, $|dV/dy|=\max (1/g,|dh_\text{min}/dy|)$. For the image $g=2.4$ was used.}
      \label{fig:mrap_1d_sketch}
  \end{figure}

 We shall now consider the 2d case in the absence of wind.
 Looking at equation \eqref{eq:HJB_mrap_app} let's first consider the case  $V(\bm y)>h_\text{min}(\bm y)$ (no interaction with obstacles). In this case, the aircraft climbs with steepness $1/g$. In the absence of wind, $g$ is constant with respect to $\bm{\hat a }$ and $V(\bm y)=h_\text{min}(\bm x_a)+\frac{1}{g}\norm{\bm y-\bm x_a}$.

  Now let's introduce the interaction with the terrain.
  If the ground rises more steeply than $1/g$ at some point the cone solution $V(\bm y)=h_\text{min}(\bm x_a)+\frac{1}{g}\norm{\bm y-\bm x_a}$ will fall below $h_\text{min}$, hence becoming invalid. The first case of \eqref{eq:HJB_mrap_app} tells us how $V$ evolves when $V(\bm y)=h_\text{min}(\bm y)$. The equation governing $V$  in this case is 
  \begin{equation}
  \label{eq:mrap_HJB_windless_interm}
      \max_{\bm{\hat a}\in S^1}\frac{\nabla V(\bm y)\cdot \bm{\hat a}}{\max\left( 1/g, \;\nabla h_\text{min}(\bm y)\cdot \bm{\hat a}\right)}=1.
  \end{equation}
  Now suppose additionally that $\norm{\nabla h_\text{min}(\bm y)}>1/g$ (terrain steeper than the glide cone). We show that setting $\nabla V(\bm y)=\nabla h_\text{min}(\bm y)$ solves \eqref{eq:mrap_HJB_windless_interm}.
  By plugging this ansatz in the equation we get
  \begin{equation}
  \label{eq:mrap_HJB_windless_interm2}
      \max_{\bm{\hat a}\in S^1}\frac{\nabla h_\text{min}(\bm y)\cdot \bm{\hat a}}{\max\left( 1/g, \;\nabla h_\text{min}(\bm y)\cdot \bm{\hat a}\right)}=1.
  \end{equation}
  which is maximized by $\bm{\hat a}^\star=\nabla h_\text{min}(\bm y)/\norm{\nabla h_\text{min}(\bm y)}$ and the maximum value is one, thus satisfying the equation.\footnote{Notice that there might be several $\bm{\hat a}$ attaining the maximum in \eqref{eq:mrap_HJB_windless_interm2}.} Once again we see that when the terrain is steeper than $1/g$, then $V$ starts tracking $h_\text{min}$.
We can write the complete HJB equation for MRAP in the zero wind case as 
\begin{equation}
 \label{eq:mrap_windless}
       \norm{\nabla V(\bm y)}=\begin{cases} 1/g  & \text{if } V(y)>h_\text{min}(y)\\
     \max\left(1/g, \norm{\nabla h_\text{min}(\bm y)}\right) & \text{if } V(y)=h_\text{min}(y).
      \end{cases}
  \end{equation}
This is the equation solved in practice by the Glikonal-M algorithm.
Put simply, the minimal return altitude is determined by the most stringent of two constraints: the glider needs to have enough altitude to glide back to the airport and it must pass above obstacles.
\section{Analytical solution of GRRP and MRAP}
\label{app:analytic_grrp_mrap}
In the case of uniform wind and no elevation (i.e. $h_\text{min}(\bm x)=-\infty \; \forall \bm x$ ) we can solve both the GRRP and the MRAP equations exactly.
We show this in the case of GRRP, but the solution for MRAP is identical.
Suppose $\bm W$ is the wind vector in the whole space. Let $z_0\in \R$ be the initial altitude of the glider and $\bm x_0$ its initial position. Since the wind is uniform, the function $g$ depends only on $\bm{\hat a}$. Equations \eqref{eq:HJB_S1} and \eqref{eq:constraints_HJB_S1} then become
\begin{align}
    \label{eq:HJB_S1_unif_wind}
    &\max_{\bm{\hat a}\in S^1} \nabla U(\bm y)\cdot \bm{\hat a} g\left(\bm{\hat a}\right) =1\\
    \label{eq:constraints_HJB_S1_no_terrain}
    &U(\bm{x_0})=0
\end{align}

The function $U(\bm y)=\norm{\bm y-\bm x_0}/g\left(\frac{\bm y-\bm x_0}{\norm{\bm y-\bm x_0}}\right)$ solves the equations above.
We prove this by inspection. Name $\bm r=\bm y-\bm x_0$ and $\bm{\hat r}=\bm r/\norm{\bm r}$.
\begin{equation}
    (\nabla U)_j(\bm y)= \frac{\hat r_j}{g(\bm{\hat r})}- \frac{1}{g(\bm{\hat r})^2}\sum_{i=1}^2\frac{\partial g}{\partial \hat r_i}(\bm{\hat r})\frac{\partial \hat r_i}{\partial r_j}= \frac{\hat r_j}{g(\bm{\hat r})}-\frac{1}{g(\bm{\hat r})^2\norm{\bm r}}\sum_{i=1}^2 \frac{\partial g}{\partial \hat r_i}(\bm{\hat r})\left(\delta_{ij}-\hat r_i\hat r_j\right).
\end{equation}
Now we compute $ \nabla U(\bm y)\cdot \bm{\hat a}=\nabla U(\bm y)\cdot \bm{\hat r}$.
\begin{equation}
    \nabla U(\bm y)\cdot \bm{\hat r}=  \frac{1}{g(\bm{\hat r})}-\frac{1}{g(\bm{\hat r})^2\norm{\bm r}}\sum_{i,j=1}^2 \frac{\partial g}{\partial \hat r_i}(\bm{\hat r})\left(\delta_{ij}\hat r_j-\hat r_i\hat r_j^2\right)=\frac{1}{g(\bm{\hat r})},
\end{equation}
To verify that this is the maximum one must then verify that the second derivative of $\nabla U(\bm y)\cdot \bm{\hat a}$ is positive.

Under the same assumptions it is possible to obtain an analytical expression for $V$, solving \eqref{eq:HJB_mrap} in MRAP. 
Its form is 
\begin{equation}
    V(\bm y)=\norm{\bm y-\bm x_0}/g\left(\frac{\bm x_0-\bm y}{\norm{\bm y-\bm x_0}}\right),
\end{equation}
Where $\bm x_0$ this time indicates the airfield's position.
The only difference with $U$ is the change of sign in the argument of $g$.
\section{Algorithms for GRRP}
\label{app:algos_GRRP}
In this section, we present an algorithm to solve GRRP in the absence of wind. This pseudocode does not reflect the actual implementation of Glikonal-G in the GitHub repository. In particular Algorithm \ref{alg:Glikonal_G_nowind} is based on the FMM while the repository implementation is based on OUMs. We still find it valuable to include this pseudocode as it shows how the algorithm works in a simplified setting, namely the case of zero wind.
The OUM based implementation can account for an arbitrary wind vector field, but it is also more complex. 
The computational complexity of both \ref{alg:Glikonal_G_nowind} and its OUM version is $O(n\log n)$ where $n$ is the number of nodes in the graph.

\begin{algorithm}[H]
\caption{Version of Glikonal-G without wind} 
\label{alg:Glikonal_G_nowind}
\begin{algorithmic}[1]
\State\textbf{Input: } Elevation profile $E=[E[0],\dots,E[n-1]]$, aircraft site $i_0$, grid spacing $h$, initial altitude of the aircraft $z_0$, glide ratio $g$, adjacency list of the grid $N=[N[0],\dots,N[n-1]]$. The list $N[i]$ stores the neighbours of $i$ in the order up, right, down, left. If node $i$ is on the boundary and therefore has fewer than four neighbours, the corresponding entries in $N[i]$ are set to $-1$.
\State\textbf{Output:} The lost altitude function $\{U_{i}\}$. If a node $i$ is not reachable then $U[i]=+\infty$.
\State known=$[\;]$ \Comment{List of nodes for which we know the minimal altitude}
\State considered=$[i_0]$ \Comment{List of nodes reached by the front}
\For{$i=-1,0,\dots,n-1$} \Comment{$U[-1]=+\infty$ is the value of the fictitious neighbour(s) for nodes on the boundary}
\State $U[i]\gets+\infty$
\EndFor
\State $U[i_0]\gets 0$
\While{considered is not empty}
\State $i\gets\arg\min_{j\in \text{considered}}U[j]$   
\State known.append($i$)
\State considered.remove($i$)
\For{$j\in N[i] \setminus \{-1\}$}
\If{$j\not\in\text{known}$}
\State $\Tilde U \gets \text{EikonalUpdate}(U,j,h,g,N)$ \Comment{Proposing new value for $U[j]$}
\If{$z_0-\Tilde U\geq E[j]$ \textbf{and} $\Tilde U<U[j]$}
\State $U[j]\gets\Tilde U$
\If{$j\not\in\text{considered}$}
\State considered.append($j$)
\EndIf
\EndIf
\EndIf
\EndFor
\EndWhile
\State \textbf{return} $U$
\end{algorithmic}
\end{algorithm}
In this algorithm, the 2d space is discretized into a grid with $n$ nodes. $h$ is the spacing between adjacent nodes on the grid. The edges in the grid are represented through an adjacency list '$N$'.
The list $E$ contains the function $h_\text{min}$ evaluated on the grid sites.

The algorithms works by dividing the nodes into three classes: far, considered, and accepted. Far nodes are those that have not been yet reached by the expanding front. Accepted ones are those for which $U$ is known. Considered nodes are nodes on the front. Initially, only the node corresponding to the initial position of the glider is in the considered set.
At each step in the while loop, we take the considered node with the lowest value of $U$ and move it into the accepted nodes. Then we compute a tentative value $\Tilde U$ for its neighbours. If $z_0-\Tilde U>h_\text{min}$ then the neighbour is added to the considered nodes (if it wasn't already among the considered nodes). The process halts when there are no more considered nodes.

To find the minimum in $O(\log n)$ time in line 10, a heap should be used to store the considered nodes. 
In the case of no wind, the vector field $\bm{\hat a }^\star_G$, defined in \eqref{eq:opt_vect_field_grrp} is equal to $\nabla U$.
Below we give the implementation of the function EikonalUpdate
\begin{algorithm}[H]
\caption{EikonalUpdate} 
\label{alg:EikonalUpdate}
\begin{algorithmic}[1]
\State\textbf{Input: } function $U$ (representing the negative altitude in GRRP or the altitude in MRAP), position to update $j$, grid spacing $h$, glide ratio $g$, adjacency list of the grid $N=[N[0],\dots,N[n-1]]$.
\State\textbf{Output:} $\Tilde U$, the updated altitude at position $i$.
\State $U_x\gets \min(U[N[j][1]],U[N[j][3]])$ \Comment{Taking the minimum between the right and left neighbours}
\State $U_y\gets \min(U[N[j][0]],U[N[j][2]])$ \Comment{Taking the minimum between the up and down neighbours}
\If{$|U_x-U_y|\leq h/g$}
\State $\Tilde U\gets \frac{1}{2}\left[U_x+U_y+\sqrt{2(h/g)^2-(U_x-U_y)^2}\right]$ \Comment{Solution of the quadratic equation $(U_x-\Tilde U)^2+(U_y-\Tilde U)^2=(h/g)^2$}
\Else
\State $\Tilde U\gets \min\left(U_x,U_y\right)+h/g$ \Comment{When the quadratic does not admit a solution use this one-dimensional update}
\EndIf

\State \textbf{return} $\Tilde U$
\end{algorithmic}
\end{algorithm}

\section{Altitude loss in turns}
\label{app:altitude_loss_turns}
The modeling of optimal control problems in both GRRP and MRAP presents some differences from the real behavior of a real aircraft.
\begin{enumerate}
    \item In our model, the glider can make arbitrarily sharp turns (e.g. instantly reverse the glider's course).
    \item At fixed airspeed the sink rate increases when turning in a real glider. This is because in a turn, the lift increases and so does the induced drag. Our model does not capture this, since the sink rate does not depend on the curvature of the path.
\end{enumerate}
Both problems arise because the ground speed is not required to be a continuous, or Lipschitz function.
The effect of the first problem is that the glider will turn sharply, for example in figure \ref{fig:grrp_test_2saddles}.

The second problem instead leads to underestimating the lost altitude, as the effect of turns is neglected. We now aim to characterize the importance of this phenomenon and derive when the approximations made by the dynamical model break down. Then we present some possible algorithmic remedies, to take into account at least the first turn (i.e. the one from the initial to the desired heading). 
\subsection{Modeling the altitude loss in a turn}
Many works \cite{di2016optimizing, chen2024gliding} that aim to compute the gliding range in the absence of obstacles assume that the glider will take an initial turn with constant bank angle from the initial heading towards the final heading. Once the turn is completed the glider continues in a straight line. 

This type of concatenated trajectory is depicted in figure \ref{fig:concat_traj}.

In this case, the glider's initial position is $(0,0)$ and its heading is indicated by the black arrow. $\bm x$ is the final position. The red line represents the optimal trajectory under our model of the dynamics. The concatenation of the blue and green lines is instead a more reasonable trajectory that can actually be flown. 

As an effect of having to turn towards $\bm x$, the gliding reachable region extends further in front of the aircraft than behind it. This is also evident from the results of \cite{di2016optimizing, chen2024gliding}.

Our goal is to compare the altitude loss on the red trajectory with that of the green-blue one, to understand in which cases the initial turn contributes significantly to the total altitude loss.
We assume that the glider's trajectory is composed of an initial circular arc, of fixed radius $R$, followed by a straight line. We indicate with $g_R$ the glide ratio during the turn, and with $g_\infty$ the glide ratio during straight flight. indicating with $\Delta H_t, \Delta H$ respectively the altitude losses on the green-blue and red trajectories.
\begin{align}
    &\Delta H=\norm{\bm x}/g_\infty\\&
    \Delta H_t=R\phi/g_R+\norm{\bm r_2}/g_\infty
\end{align}
Their difference is $\Delta H_t-\Delta H=\frac{1}{g_\infty}\left(\norm{\bm r_2}-\norm{\bm x}\right)+R\phi/g_R$.

We notice that $\norm{\bm x}-\norm{\bm r_2}\leq 2R$, giving

\begin{equation}
    \Delta H_t-\Delta H\leq R\left(\frac{\phi}{g_R}+\frac{2}{g_\infty}\right)
\end{equation}
We see that the larger the turn radius of the aircraft, the more potential discrepancy between the two altitude losses. The lower glide ratio $g_R$ also plays a role. Finally the larger the angle between the initial and final heading, the larger the discrepancy.

Ultimately, the parameter controlling the altitude loss is $\phi R/g_R$. The larger this number the more altitude is lost when turning onto a different heading. 

At this point, it is natural to wonder if there is a way to compensate for this difference in altitude loss.
The simplest option is to use a more conservative glide ratio (i.e., a smaller one) for the computation of $\Delta H$. Naming this $\Tilde g_\infty<g_\infty$, we have $\Delta H_t-\Delta H=\frac{\norm{\bm r_2}}{g_\infty}-\frac{\norm{\bm x}}{\Tilde g_\infty}+R\phi/g_R$. When $\norm{\bm x}$ is sufficiently large, this quantity is negative. This option however is not satisfactory when $\norm{\bm x}$ is too small.

To overcome this problem we can use a different approach.
As we presented it, the Glikonal-G algorithm starts expanding the front from a single node. In general, however, Glikonal-G can also take as input a partial solution, i.e., a function $U$ whose values are defined only on a certain region of space. Then Glikonal-G expands the solution to the whole space.

One can then use the trajectories composed by a turn plus a straight line to compute $U$ in a region around the aircraft (one must assume that in this region the aircraft does not collide with obstacles). Then, running Glikonal-G the solution is extended to the entire domain.
This allows to account approximately for the lost altitude during the initial turn. In case the aircraft makes additional turns to avoid terrain, it is not possible to correct the altitude loss.
\begin{figure}
    \centering
    \includegraphics[width=0.7\textwidth]{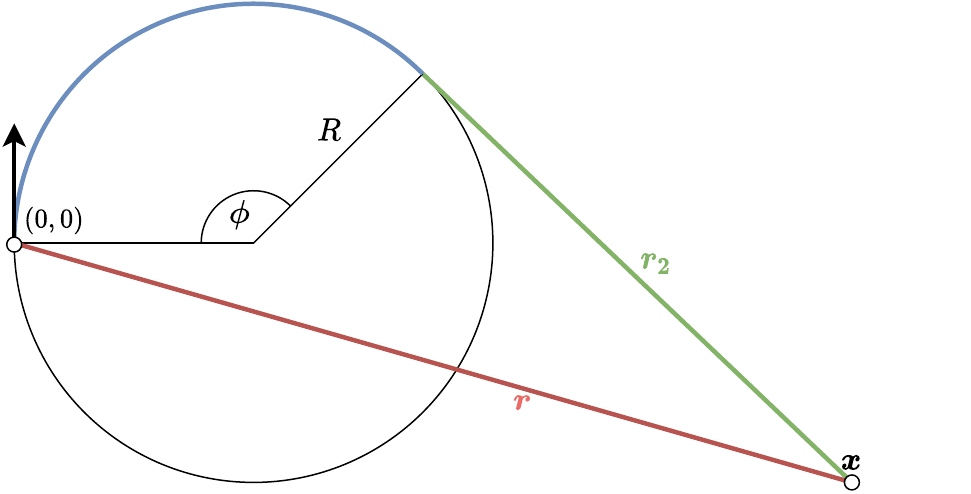}
    \caption{The glider's initial position is $\bm{x_0}$, and its final position is $\bm x$. The glider's initial heading is up. The red line represents the optimal trajectory going from $\bm x_0$ to $x$, under our model. The concatenation of the blue and green lines is instead a more realistic trajectory}
    \label{fig:concat_traj}
\end{figure}

\section{Algorithm for MRAP}
\label{app:algo_mrap}
In this appendix, we provide the pseudocode of the Glikonal-M algorithm. Glikonal-M works on a grid with identical horizontal and vertical spacings. This implementation is does not take wind into account and solves equation \eqref{eq:mrap_windless}. It is based on the Fast Marching Method. The following pseudocode reflects the algorithm published in the repository.
\begin{algorithm}[H]
\caption{Glikonal-M. Algorithm to solve MRAP in the absence of wind.} 
\label{alg:Glikonal_M_nowind}
\begin{algorithmic}[1]
\State\textbf{Input: } Elevation profile $E=[E[0],\dots,E[n-1]]$, airfield site $i_0$, grid spacing $h$, glide ratio $g$,  adjacency list of the grid $N=[N[0],\dots,N[n-1]]$. The list $N[i]$ stores the neighbours of $i$ in the order up, right, down, left. If node $i$ is on the boundary and therefore has fewer than four neighbours, the corresponding entries in $N[i]$ are set to $-1$.
\State\textbf{Output:} The minimal return altitude function $\{V_{i}\}$.
\State known=$[\;]$ \Comment{List of nodes for which we know the minimal altitude}
\State considered=$[i_0]$ \Comment{List of nodes reached by the front}
\For{$i=-1,0,\dots,n-1$} \Comment{$V[-1]=\infty$ is the value of the fictitious neighbour for nodes on the boundary}
\State $V[i]\gets \infty$
\EndFor
\State $V[i_0]\gets E[i_0]$
\While{considered is not empty}
\State $i\gets\arg\min_{j\in \text{considered}}V[j]$   
\State known.append($i$)
\State considered.remove($i$)
\For{$j\in N[i] \setminus \{-1\}$}
\If{$j\not\in\text{known}$}
\State $V[j] \gets \min\left\{\max\left(\text{EikonalUpdate}(V,j,h,g,N), E[j]\right), V[j]\right\}$ \Comment{Updating the value of $V[j]$}
\If{$j\not\in\text{considered}$}
\State considered.append($j$)
\EndIf
\EndIf
\EndFor
\EndWhile
\State \textbf{return} $V$
\end{algorithmic}
\end{algorithm}
In this algorithm, the 2d space is discretized into a grid with $n$ nodes. $h$ is the spacing between adjacent nodes on the grid. The edges in the grid are represented through an adjacency list '$N$'.
The list $E$ contains the function $h_\text{min}$ evaluated on the grid sites.

The algorithms works by dividing the nodes into three classes: far, considered, and accepted. Far nodes are those that have not been yet reached by the expanding front. Accepted ones are those for which $V$ is known. Considered nodes are nodes on the front. Initially, only the node corresponding to the position of the airfield is in the considered set.
At each step in the while loop, we take the considered node with the lowest value of $V$ and move it into the accepted nodes. Then we 
 add its neighbours to the considered set (if they were not there already) and compute a tentative value of $V$ for each neighbour. 
 The tentative value is computed by taking the maximum between the elevation $E$ and the altitude needed to glide back to the airport in the absence of the obstacle.
 The process halts when there are no more considered nodes.

To find the minimum in $O(\log n)$ time in line 10, a heap should be used to store the considered nodes. 
In the case of no wind, the vector field $\bm{\hat a }^\star_M$, defined in section \ref{sec:mrap_main}, is equal to $\nabla U$.
The implementation of the function EikonalUpdate is in \ref{alg:EikonalUpdate}.

\section{Additional numerical experiments}
\label{app:additional_numerics}
In this appendix, we present an additional experimental setting for Glikonal-G and Glikonal-M. We consider specifically the case of an elevation profile including a mountain range with two passes. In both cases, the glide ratio in the absence of wind is one and the grid has size $101\times 101$, with a grid spacing of one.
\subsection{GRRP}
We assume that the glider travels at fixed airspeed of one. The wind has the functional form $\bm W(\bm y)=(0,(50-y_1)/100)$. The wind vector field is plotted on top of the elevation profile ($h_\text{min}$) in figure \ref{fig:grrp_mountain_range}(a). The initial altitude of the glider is $z_0=110$. Panel (b) depicts the contour lines of $z_0-U_G$ in white and the optimal trajectories in turquoise. The red shaded regions are unreachable in gliding flight. We observe that the glider is forced to pass the mountain range through one of the two passes.
In (c), a three dimensional view of the elevation profile and of the function $z_0-U_G$ is presented.
\begin{figure}
    \centering
    \includegraphics[width=\textwidth]{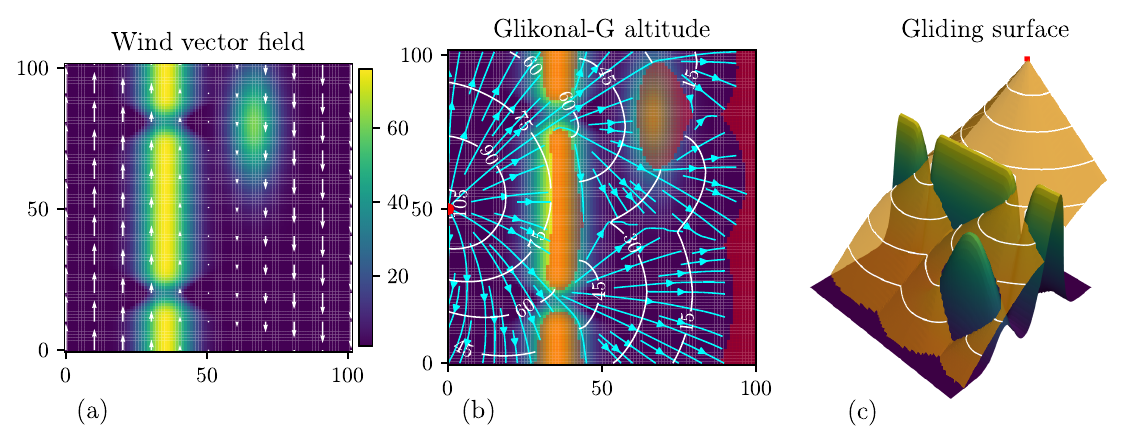}
    \caption{Glikonal-G's solution in the case of non-uniform wind and a mountain range with two passes. The red dot represents the glider's initial position. \textbf{(a)} Wind direction (constant with altitude) plotted on top of the elevation profile (displayed as a heatmap). \textbf{(b)} Contour lines of the function $z_0-U_G$ (in white) and optimal trajectories (in turquoise). The heatmap is the elevation profile, and the red-shaded regions are those not reachable in gliding flight. \textbf{(c)} 3d representation of the function $z_0-U_G$, in orange and of $h_\text{min}$.}
    \label{fig:grrp_mountain_range}
\end{figure}
\subsection{MRAP}
We test Glikonal-G on an elevation profile depicted in figure \ref{fig:mrap_mountain_range}(a) as a heatmap. The purple dot indicates the position of the airfield. In the same plot, the contour lines of $V$ are shown in white, and the feasible re-entry paths in turquoise. We remark that the re-entry trajectories tend to pass through one of the two saddles. In a small region just behind the middle of the mountain range, it is then convenient to pass the range directly, since reaching a saddle would lead to a greater loss of altitude. This region is well visible in panel (b), where it takes a triangular shape, with the base at the top of the mountain range.

\begin{figure}
    \centering
    \includegraphics[width=0.8\textwidth]{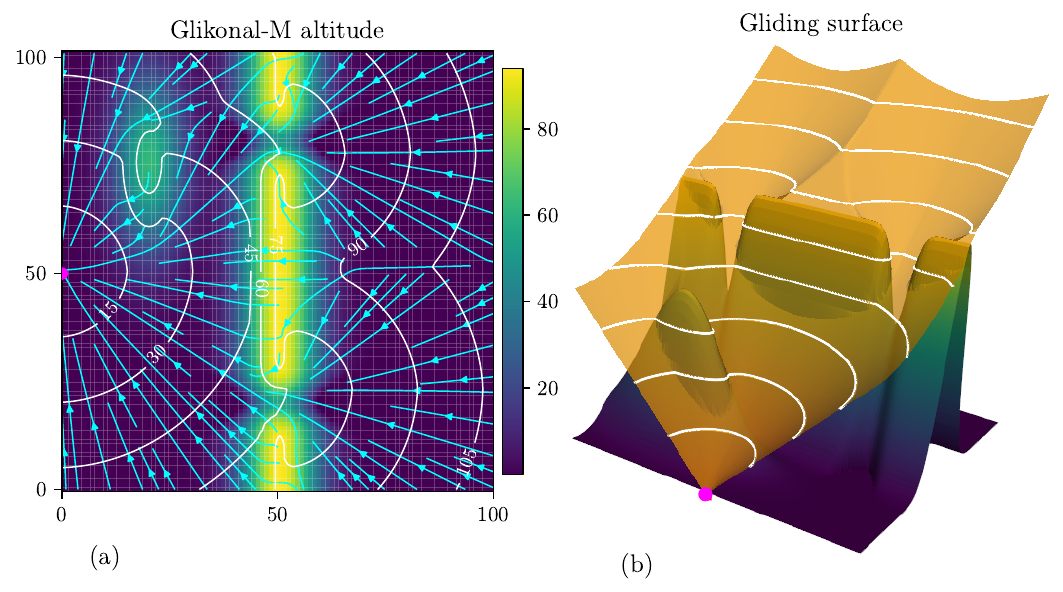}
    \caption{Glikonal-M's solution in the case of an elevation profile with a mountain range with two passes. The purple dot represents the airfield's position. \textbf{(a)} Heatmap of the elevation profile with contour lines of the function $V_G$ (in white) and re-entry trajectories (in turquoise). \textbf{(b)} 3d plot of the function $V_G$ in orange on top of the elevation profile.}
    \label{fig:mrap_mountain_range}
\end{figure}
\section{Error in Glikonal-M and Glikonal-G}
\label{app:error_algo}
In this appendix, we test the Glikonal-M and Glikonal-G algorithms in several settings where we can compute the true solution. The errors are due to the discretization of the PDE, which is numerically solved on a grid. We also verify that when the grid spacing decreases the error also does. For proofs of convergence of the discretized solution to the true one when the grid spacing approaches zero we refer the reader to \cite{sethian2003ordered}.

\subsection{Glikonal-G}
Name $U_G$ the approximation of $U$ outputted by Glikonal-G. The following experiments are all run on a $101\times101$ grid with a spacing of one. The glide ratio is also one in the absence of wind. When wind is present the glide ratio in direction $\bm{\hat a}$ is $g(\bm x, y,\bm{\hat a})=\norm{\bm{\hat a}+\bm W(\bm x,z)}$, corresponding to the fact that the glider moves in every direction with the same airspeed of one. The red dot indicates the initial position of the glider.

The first experiment is similar to the one of figure \ref{fig:flat_glikonal_g}. We set $h_\text{min}=0$ everywhere. The wind is taken to be uniform with strength $0.6$ and direction $225$°. The initial altitude is $100$.
The results are displayed in figure \ref{fig:grrp_test_flat}. In panel (a) we plot the function $z_0-U_G$.
In (b) the function $(U_G-U)/U$ is plotted. We remark that the relative error barely exceeds 2\%. The total error $U_G-U$ is plotted in panel (c). The total error increases when moving away from the initial position while the relative one decreases. 
\begin{figure}[h]
    \centering
    \includegraphics[width=0.8\textwidth]{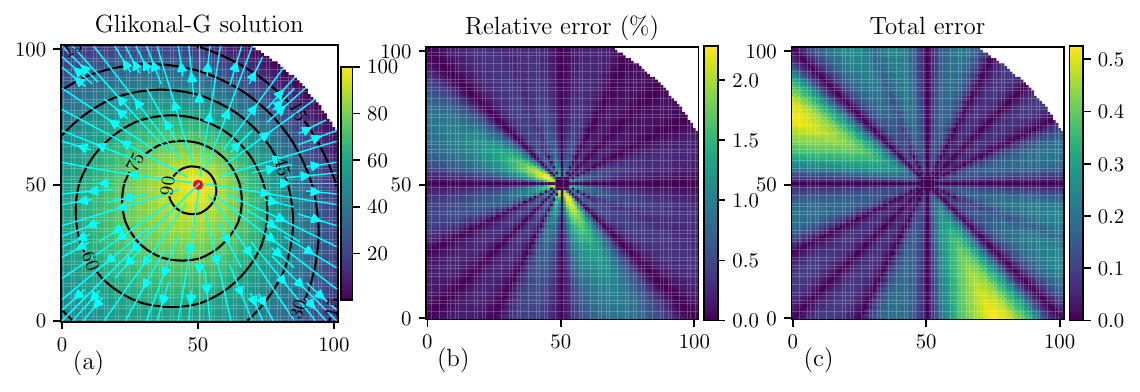}
    \caption{Test of Glikonal-G on flat terrain. \textbf{(a)} Heatmap and contour lines of the function $z_0-U_G$. The turquoise lines are the optimal trajectories and the red dot is the initial position of the glider. \textbf{(b)} Relative error, $(U_G-U)/U$. \textbf{(c)} Total error $U_G-U$.}
    \label{fig:grrp_test_flat}
\end{figure}

In the second experiment, we place an infinitely high barrier in the middle of the domain. We leave two small openings where $h_\text{min}=0$ respectively in the upper and lower portion of the barrier. Apart from the barrier we set $h_\text{min}=0$ everywhere else. The barrier is depicted as a white vertical line in figure \ref{fig:grrp_test_2saddles}. The initial altitude is 120. The wind is uniform with strength $0.4$ and pointing north. In (a) we plot $z_0-U_G$. The panels (b,c) instead show the relative and total error, defined as above. The total error reaches its maximum value on the boundary of the GRR. In figure \ref{fig:test_mrap_2saddles_h025} we give a numerical demonstration of the fact that when the grid spacing goes to zero, $U_G$ approaches $U$. We used a grid with size $404\times 404$ and spacing 0.25. By comparing panels (b,c) with those of the previous figure we observe that both the relative and total error are greatly reduced. On the boundary of the GRR some small errors remain.
\begin{figure}[h]
    \centering
    \includegraphics[width=0.8\textwidth]{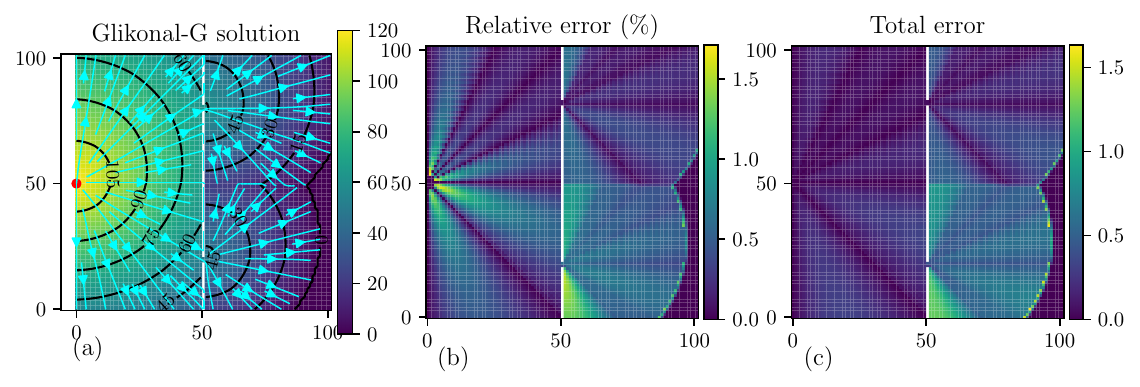}
    \caption{Test of Glikonal-G on elevation profile with an infinite barrier with two passes. The infinite barrier is portrayed as a vertical white line, with two openings representing the passes. \textbf{(a)} Heatmap and contour lines of the function $z_0-U_G$. The turquoise lines are the optimal trajectories and the red dot is the initial position of the glider. \textbf{(b)} Relative error, $(U_G-U)/U$. \textbf{(c)} Total error $U_G-U$.}
    \label{fig:grrp_test_2saddles}
\end{figure}
\begin{figure}[H]
    \centering
    \includegraphics[width=0.8\textwidth]{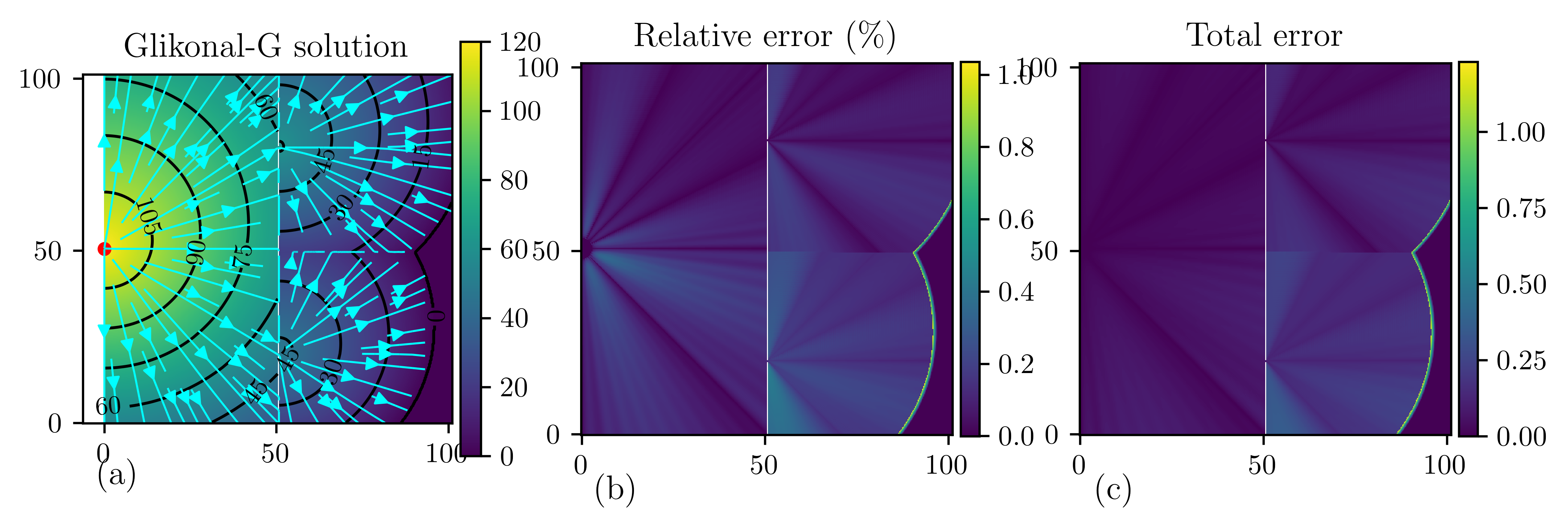}
    \caption{Test of Glikonal-G on elevation profile with an infinite barrier with two passes, same as in figure \ref{fig:grrp_test_2saddles}. The grid spacing is 0.25, one quarter of the grid spacing in the previous figure, and the grid size is $404\times 404$. \textbf{(a)} Heatmap and contour lines of the function $z_0-U_G$. The turquoise lines are the optimal trajectories and the red dot is the initial position of the glider. \textbf{(b)} Relative error, $(U_G-U)/U$. \textbf{(c)} Total error $U_G-U$.}
    \label{fig:enter-label}
\end{figure}

In the last test, we place a barrier in the same position as the previous experiment. This time however the barrier has a finite elevation of 45. This way the glider can pass the barrier if it approaches it directly, but cannot do so if it proceeds diagonally. The two white segments represent where the barrier stops the glider from passing. In the rest of space, we set $h_\text{min}=0$.
The wind has a strength of 0.3 and is directed towards the north. In panel (a) the function $z_0-U_G$ is plotted. The panels (b,c) instead show the relative and total error, defined as above. The maximum error is attained on the bottom right. In this case, the error arises from the fact that in the true solution, the glider manages to pass the barrier in a slightly larger (by one pixel) portion of it.

\begin{figure}[h]
    \centering
    \includegraphics[width=0.8\textwidth]{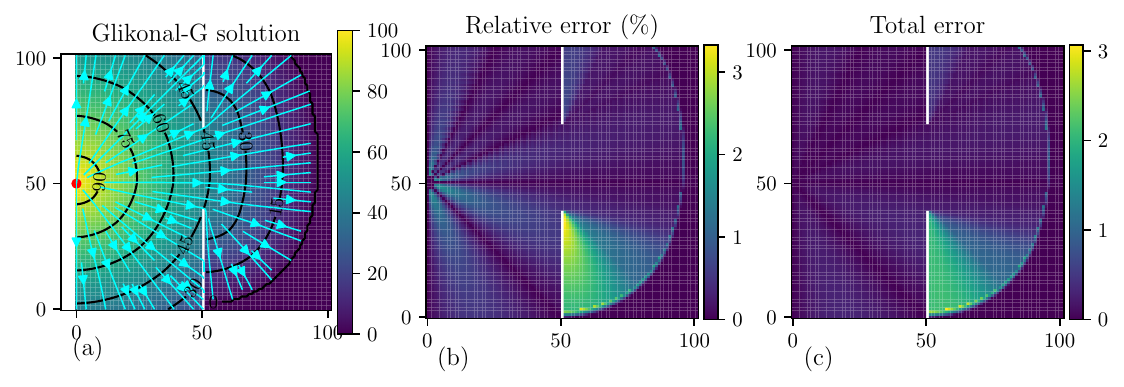}
    \caption{Test of Glikonal-G on elevation profile with a finite barrier of height 45. The glider has enough altitude to traverse the barrier in the middle but not on the sides. The two white segments represent where the glider fails to cross the barrier. \textbf{(a)} Heatmap and contour lines of the function $z_0-U_G$. The turquoise lines are the optimal trajectories and the red dot is the initial position of the glider. \textbf{(b)} Relative error, $(U_G-U)/U$. \textbf{(c)} Total error $U_G-U$.}
    \label{fig:grrp_test_range}
\end{figure}
In all experiments, the relative error of Glikonal-G is below 4\%. Moreover, all errors overestimate the altitude loss, leading to conservative estimates of the GRR.

\subsection{Glikonal-M}
We repeat the experiments of the last section for the case of Glikonal-G. The elevation profiles we consider are similar but not identical to those above. The purple dot indicates the position of the airfield.

Figure \ref{fig:test_mrap_flat} considers the case of a flat elevation profile, i.e., $h_\text{min}=0$ everywhere. 
Panel (a) shows the function $V_G$ via its contour lines and a heatmap. The turquoise lines are the feasible re-entry paths. In panels (b,c) the relative and total error, respectively $(V_G-V)/V$ and $V_G-V$ are plotted. 
We observe that the error is maximal along the $\pm 45$° (with respect to the x-axis) directions, while it is zero on the axes. Notice how the radial patterns of the error are different from those of Glikonal-G. While the presence of wind also plays a role, the difference mainly originates from the methods used to solve the HJB equations. 
In the case of Glikonal-G, the OUM method requires using a triangulated grid, which is built by adding an edge on one of the two diagonals of each square in the grid. Glikonal-M instead uses the FMM, which uses a square grid without any triangulation.

In the second test, we consider a flat elevation profile with a barrier of infinite height. The barrier has two openings where it can be passed ($h_\text{min}=0$ in correspondence of the openings). The barrier and the openings are shown in white in panels (b,c) of figure \ref{fig:test_mrap_2saddles}. In (a) the heatmap of $V_G$ and its contour lines are shown. The feasible return paths, in turquoise, can be seen to pass exclusively from the two openings. The relative error and total error are plotted in (b,c).
\begin{figure}[h]
    \centering
    \includegraphics[width=0.8\linewidth]{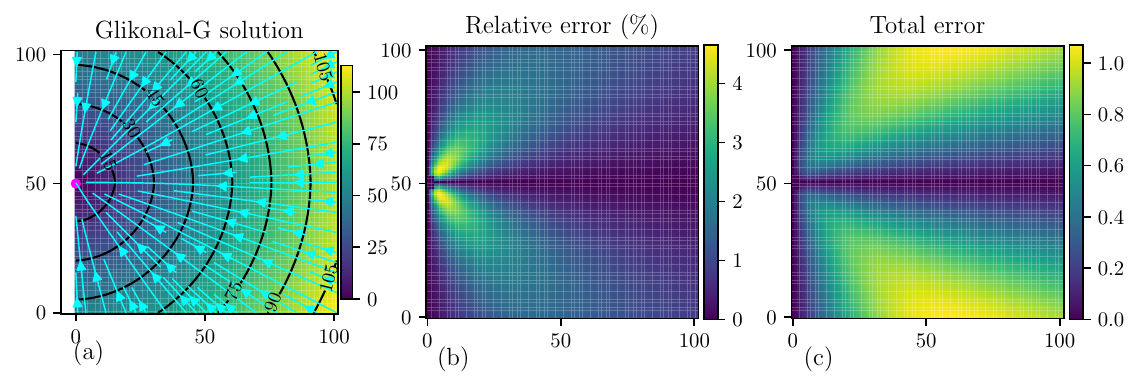}
    \caption{Test of Glikonal-M on flat terrain ($h_\text{min}=0$ everywhere). The purple dot indicates the position of the airfield. \textbf{(a)} Heatmap and contour lines of $V_G$, the solution outputted by Glikonal-M. The turquoise lines are the feasible re-entry trajectories. \textbf{(b)} Relative error $(V_G-V)/V$ in percent. \textbf{(c)} Total error $V_G-V$. }
    \label{fig:test_mrap_flat}
\end{figure}

\begin{figure}[h]
    \centering
    \includegraphics[width=0.8\linewidth]{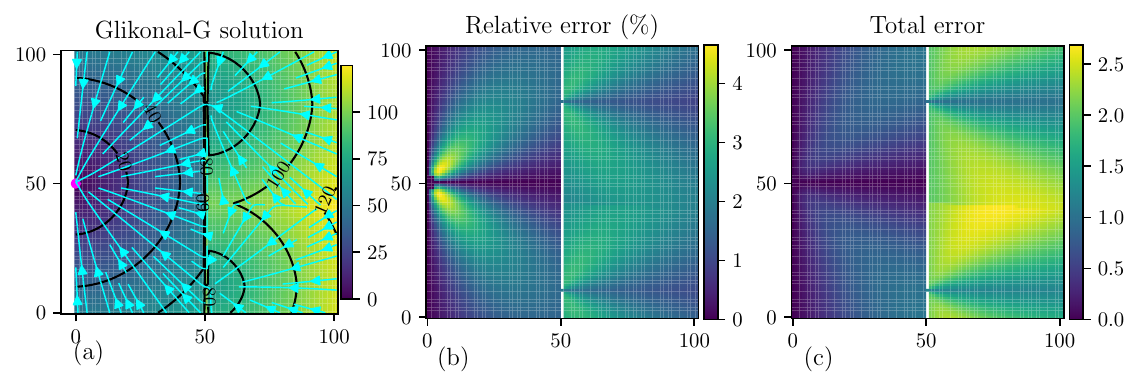}
    \caption{Test of Glikonal-M on elevation profile with an infinite barrier with two narrow openings. In (b,c) the barrier is visible as a white line broken in two points in correspondence of the openings. In the openings and outside the barrier $h_\text{min}=0$. The purple dot indicates the position of the airfield. \textbf{(a)} Heatmap and contour lines of $V_G$, the solution outputted by Glikonal-M. The turquoise lines are the feasible re-entry trajectories. \textbf{(b)} Relative error $(V_G-V)/V$ in percent. \textbf{(c)} Total error $V_G-V$.}
    \label{fig:test_mrap_2saddles}
\end{figure}
To show that $V_G$ approaches $V$ when the grid spacing approaches zero, we repeat the previous experiment on a grid with $404\times404$ nodes and grid spacing $0.25$. Figure \ref{fig:test_mrap_2saddles_h025} illustrates the results. Comparing the panels (a) in \ref{fig:test_mrap_2saddles_h025} and \ref{fig:test_mrap_2saddles} we observe no qualitative difference, however, the maximum relative error is reduced by about a factor 4, and so is the maximum total error.

\begin{figure}[h]
    \centering
    \includegraphics[width=0.8\textwidth]{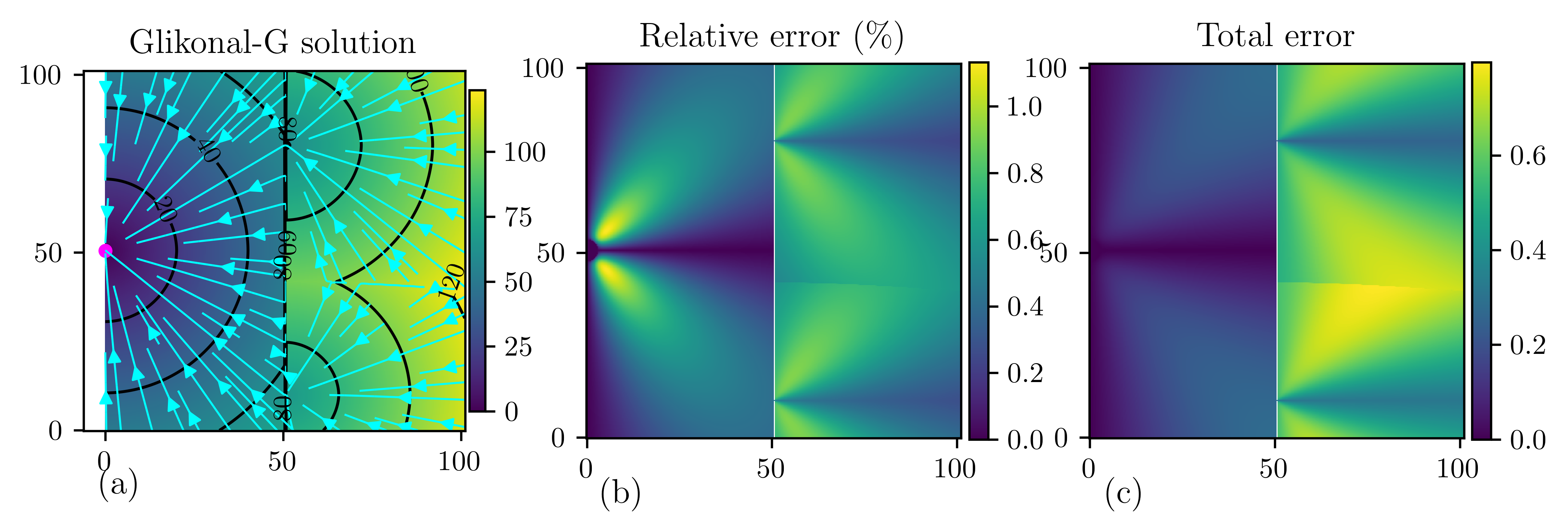}
    \caption{Test of Glikonal-M on elevation profile with an infinite barrier with two narrow openings. Same elevation profile as in figure \ref{fig:test_mrap_2saddles}. Grid spacing: $0.25$, grid size: $404\times404$. The purple dot indicates the position of the airfield. \textbf{(a)} Heatmap and contour lines of $V_G$, the solution outputted by Glikonal-M. The turquoise lines are the feasible re-entry trajectories. \textbf{(b)} Relative error $(V_G-V)/V$ in percent. \textbf{(c)} Total error $V_G-V$. }
    \label{fig:test_mrap_2saddles_h025}
\end{figure}
The final test, similar to \ref{fig:grrp_test_range} involves an elevation profile with a barrier in the same position as the previous test, but with a height of 60. Figure \ref{fig:test_mrap_barrier} (a) depicts the function $V_G$ and the feasible trajectories. Notice that if a glider is on the opposite side of the barrier, for a certain range of radials from the airfield, it must fly perpendicular to the barrier and will turn towards the airfield only after passing the barrier.
Panels (b,c) depict respectively the relative and total error of $V_G$.

\begin{figure}[h]
    \centering
    \includegraphics[width=0.8\linewidth]{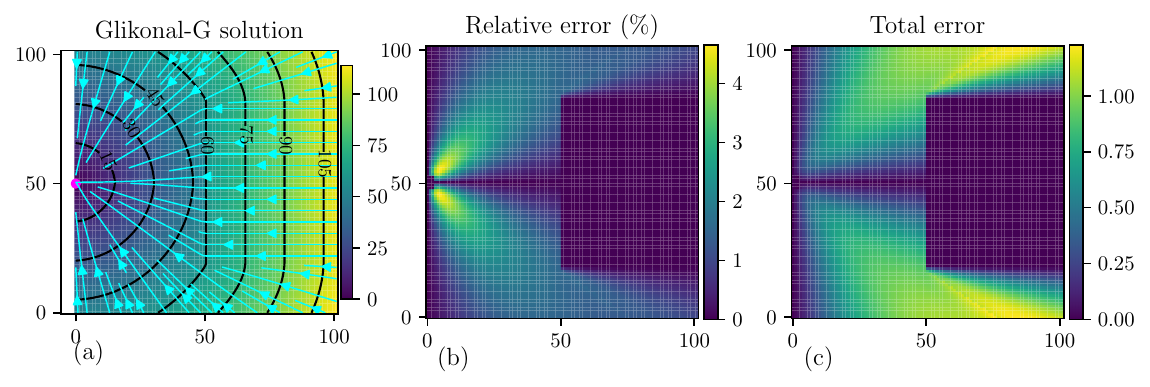}
    \caption{Test of Glikonal-M on elevation profile with a finite barrier. The barrier has a height of 60 and traverses the plot vertically. The x coordinate of the barrier is 50. The purple dot indicates the position of the airfield. \textbf{(a)} Heatmap and contour lines of $V_G$, the solution outputted by Glikonal-M. The turquoise lines are the feasible re-entry trajectories. \textbf{(b)} Relative error $(V_G-V)/V$ in percent. \textbf{(c)} Total error $V_G-V$.}
    \label{fig:test_mrap_barrier}
\end{figure}

Throughout these three tests, Glikonal-G's relative errors are always below  5\%. 
When an error is present, it is always on the conservative side, in the sense that $V_G>V$, hence the minimal re-entry altitude is overestimated.
\section{Trajectories in MRAP and GRRP}
\label{app:optimal_traj_mrap_grrp}
In this appendix we discuss the difference between the trajectories found when integrating the GRRP vector field $\bm{\hat a_G}$, \eqref{eq:opt_vect_field_grrp} and those found when integrating the MRAP vector field  $\bm{\hat a_M}$ introduced in \ref{sec:mrap_main}.
Let us first specify the setting. Let $\bm x_a\in\R^2$ and $\bm y\in\R^2$ be respectively the position of the airfield and of the glider. Suppose further that the wind is zero everywhere and that the altitude of the glider is $V(\bm y)$, the minimal altitude needed to reach the airfield, according to MRAP.
Solve GRRP with initial condition $(\bm y,V(\bm y))$ and compute the optimal trajectory (recall that trajectories in GRRP are guaranteed to lead to the least loss of altitude) $\gamma_G:\R_+\mapsto \R^2$ going from $\bm y$ to $\bm x_a$, by integrating the vector field $-\bm{\hat a}_G$ starting from $\bm x_a$.
Compare this to the trajectory $\gamma_M:\R_+\mapsto \R^2$ obtained by integrating $-\bm{\hat a_M}$ starting from $\bm y$. Both curves $\gamma_M$ and $\gamma_G$ connect $\bm x_a$ to $\bm y$, albeit going in opposite directions.

Do $\gamma_G$ and $\gamma_M$ follow the same path?
The answer in general is no. To prove this we will show a counterexample where the two curves are different.
Suppose the glide ratio is one and consider a 'staircase' elevation profile. Looking at the heatmap in figure \ref{fig:grrp_mrap_traj_diff} the elevation is 0 between 0 and 33 on the x-axis, it's 100 between 34 and 66, and it's 200 from 66 to 100. In this example, the solution of both GRRP and MRAP can be computed analytically. The turquoise lines are the curves $\gamma_M$ while the blue lines are the curves $\gamma_G$ (reversed in direction). We observe that the blue lines turn towards the airfield immediately after traversing the first 'step' of the staircase. Instead, the turquoise lines keep going straight and turn after traversing the second step. 
\begin{figure}
    \centering
    \includegraphics[width=0.4\textwidth]{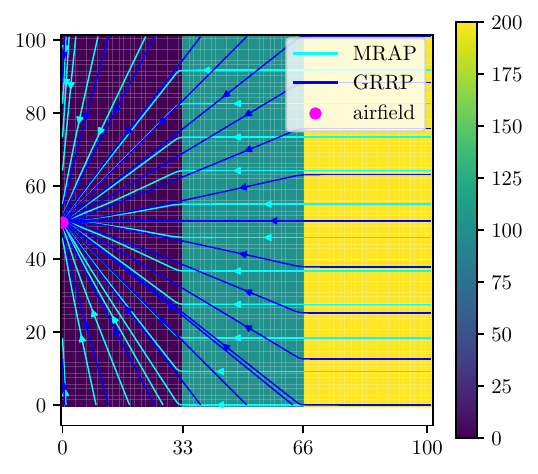}
    \caption{Trajectories of MRAP and GRRP. The heatmap represents a staircase elevation profile. The purple dot is the position of the airfield. The turquoise and blue lines are respectively the return trajectories outputted by MRAP and the optimal trajectories in GRRP.}
    \label{fig:grrp_mrap_traj_diff}
\end{figure}
Let us understand why this is the case. Let $y_1,y_2$ be respectively the x and y coordinates of $\bm y$.

The minimal altitude function $V$ in this case is 
\begin{equation}
V(\bm y)=
     \begin{cases}
    \norm{y} & \text{if } y_1\leq 33\\
   100+(y_1-33) & \text{if } 33<y_1\leq66\\
   200+(y_1-66) &\text{if } y_1>66
  \end{cases}
\end{equation}
As explained in  \ref{sec:simplifications_hjb_mrap}, the vector field in MRAP is $\bm{\hat a}_M=\nabla V(\bm y)/\norm{\nabla V(\bm y)}$, and is therefore parallel to the x-axis for $y_1>33$. 

Let us look at GRRP, and consider a point $\bm y$ with $y_1>66$. 
The curve $\gamma_G$ starting from $\bm y$ with altitude $V(\bm y)=200+(y_1-66)$ must initially go straight towards the point $(66,y_2)$, which will be reached at exactly an altitude of $200$. 
After that, however, $\gamma_G$ turns towards the airfield, as there are no more obstacles, and therefore proceeds in a straight line.

To conclude, the difference between $\gamma_G$ and $\gamma_M$ is that $\gamma_M$ always assumes that the aircraft is at the minimal altitude $V$, at each point in the trajectory, even when the terrain drops more sharply than the glide ratio. Instead $\gamma_G$ supposes that the glider flies with a fixed glide ratio.

Notice that in the absence of obstacles, the two trajectories coincide. This can be seen also from the analytical solutions presented in \ref{app:analytic_grrp_mrap}.
\end{document}